\newtheorem{theorem}{Theorem}[section]
\newtheorem{claim}[theorem]{Claim}
\newtheorem{lemma}[theorem]{Lemma}
\newtheorem{definition}[theorem]{Definition}
\newtheorem{corollary}[theorem]{Corollary}
\newtheorem{conjecture}[theorem]{Conjecture}
\newcommand{\eps}{\varepsilon}
\newcommand{\F}{\mathcal{F}}
\newcommand{\U}{\mathcal{U}}
\newcommand{\s}{\mathbf{s}}
\newcommand{\Q}{\mathbb{Q}}
\newcommand{\B}{\mathcal{B}}
\newcommand{\E}{\mathbb{E}}
\DeclareMathAccent{\widehat}{\mathord}{largesymbols}{"62}
\title{Improved bounds for the sunflower lemma}
\author{
Ryan Alweiss\thanks{Research supported by an NSF Graduate Research Fellowship.}\\
Department of Mathematics\\
Princeton University\\
\texttt{alweiss@math.princeton.edu}
\and
Shachar Lovett\thanks{Research supported by NSF award 1614023.}\\
Department of Computer Science\\
University of California, San Diego\\
\texttt{shachar.lovett@gmail.com}
\and 
Kewen Wu\\
Department of Computer Science\\
University of California, Berkeley\\
\texttt{shlw\_kevin@hotmail.com}
\and
Jiapeng Zhang\\
Department of Computer Science\\
University of Southern California\\
\texttt{jpeng.zhang@gmail.com}
}
\begin{document}
\maketitle

\begin{abstract}
A sunflower with $r$ petals is a collection of $r$ sets so that the intersection of each pair is equal to the intersection of all of them. Erd\H{o}s and Rado proved the sunflower lemma: for any fixed $r$, any family of sets of size $w$, with at least about $w^w$ sets, must contain a sunflower with $r$ petals. The famous sunflower conjecture states that the bound on the number of sets can be improved to $c^w$ for some constant $c$. In this paper, we improve the bound to about $(\log w)^w$. In fact, we prove the result for a robust notion of sunflowers, for which the bound we obtain is sharp up to lower order terms.
\end{abstract}

\section{Introduction}

\label{sec:introduction}

Let $X$ be a finite set. A set system $\F$ on $X$ is a collection of subsets of $X$. We call $\F$ a $w$-set system if each set in $\F$ has size at most $w$.

\begin{definition}[Sunflower]
A collection of sets $S_1,\ldots,S_r$ is an \emph{$r$-sunflower} if
$$
S_i \cap S_j = S_1 \cap \cdots \cap S_r, \qquad \forall i \ne j.
$$
We call $K=S_1 \cap \cdots \cap S_r$ the \emph{kernel} and $S_1 \setminus K,\ldots,S_r \setminus K$ the \emph{petals} of the sunflower.
\end{definition}

Erd\H{o}s and Rado \cite{ErdosR1960} proved that large enough set systems must contain a sunflower.  It is noteworthy that Erd\H{o}s and Rado originally called sunflowers $\Delta$-systems, but the term ``sunflower'' was coined by Deza and Frankl \cite{deza1981every} and is now more widely used.

\begin{lemma}[Sunflower lemma \cite{ErdosR1960}]
\label{lemma:sunflower}
Let $r \ge 3$ and $\F$ be a $w$-set system of size $|\F| \ge w! \cdot (r-1)^w$. Then $\F$ contains an $r$-sunflower.
\end{lemma}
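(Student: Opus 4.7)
The plan is to prove the sunflower lemma by induction on the width $w$, combining a simple disjointness dichotomy with a pigeonhole step. For the base case $w = 1$, the family $\F$ consists of singletons (and possibly $\emptyset$); the counting bound $|\F| \ge w!(r-1)^w$ is large enough to force $r$ distinct singletons, and those immediately form an $r$-sunflower with empty kernel.

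For the inductive step I would apply a standard greedy argument. Pick a maximal subcollection $S_1,\ldots,S_t \in \F$ of pairwise disjoint sets. If $t \ge r$, then $\{S_1,\ldots,S_r\}$ is already an $r$-sunflower, with empty kernel. Otherwise $t \le r-1$, so, by maximality, every set in $\F$ intersects $Y = S_1 \cup \cdots \cup S_t$, and $|Y| \le (r-1)w$. Pigeonhole then produces an element $y \in Y$ contained in a subfamily $\F_y \subseteq \F$ with $|\F_y| \ge |\F|/((r-1)w)$.

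I would then pass to the link $\F' = \{ S \setminus \{y\} : S \in \F_y \}$. Distinct sets in $\F_y$ all contain $y$, hence remain distinct after removing it, so $|\F'| = |\F_y| \ge (w-1)!(r-1)^{w-1}$. Since $\F'$ is a $(w-1)$-set system satisfying the inductive hypothesis, it contains an $r$-sunflower $T_1,\ldots,T_r$ with some kernel $K'$. Adding $y$ back yields $T_1 \cup \{y\},\ldots,T_r \cup \{y\} \in \F$, an $r$-sunflower with kernel $K' \cup \{y\}$.

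The bookkeeping is clean because the bound $w!(r-1)^w$ factors exactly as $w(r-1)$ times the bound at width $w-1$, which is precisely what the pigeonhole step costs. The whole argument is really driven by the dichotomy \emph{many disjoint sets versus a small common ground set}. The step hardest to improve is the pigeonhole reduction: it discards almost all the combinatorial structure of $\F_y$ beyond the single fact that a common element exists, and it is this wastefulness that produces a factorial-type bound rather than the conjectured single exponential $c^w$ that the paper ultimately works to approach.
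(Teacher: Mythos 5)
The paper does not prove \Cref{lemma:sunflower}; it is simply quoted from Erd\H{o}s and Rado \cite{ErdosR1960}. Your argument is precisely the classical Erd\H{o}s--Rado proof that the citation refers to: the dichotomy between having $r$ pairwise disjoint sets and having all sets hit a small ground set $Y$, followed by pigeonholing on an element of $Y$ and passing to the link, with the bound $w!(r-1)^w = w(r-1)\cdot (w-1)!(r-1)^{w-1}$ making the induction telescope cleanly. So in spirit this is exactly the intended proof.

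Two small points of care are worth flagging, neither of which is a conceptual gap. First, the bound as stated in the paper with $\ge$ is an off-by-one: the classical theorem requires $|\F| > w!(r-1)^w$. Your base case $w=1$ inherits this --- $|\F|\ge r-1$ does not force $r$ distinct singletons ($\F=\{\{1\},\dots,\{r-1\}\}$ meets the bound but has only $r-1$ sets). With the strict inequality, $|\F|\ge r$ and the base case goes through, since any $r$ of these sets (even if one is $\emptyset$) are pairwise disjoint and hence a sunflower. Second, in the inductive step the assertion ``by maximality, every set in $\F$ intersects $Y$'' fails for $S=\emptyset$, which, if present, must lie in your maximal disjoint collection yet contributes nothing to $Y$; this loses at most one set in the pigeonhole count. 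Both issues vanish if one works, as Erd\H{o}s and Rado do, with $w$-uniform families and strict inequality, and one recovers the $w$-set-system version easily from the uniform one. So: correct proof, same route as the cited source, just tighten the stated threshold to strict inequality and dispose of $\emptyset$ explicitly.
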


Erd\H{o}s and Rado conjectured in the same paper that the bound in \Cref{lemma:sunflower} can be drastically improved.

\begin{conjecture}[Sunflower conjecture \cite{ErdosR1960}]
\label{conj:sunflower}
Let $r \ge 3$. There exists $c=c(r)$ such that any $w$-set system $\F$ of size $|\F| \ge c^w$ contains an $r$-sunflower.
\end{conjecture}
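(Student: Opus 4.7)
The plan is to replace the rigid notion of a sunflower with a more flexible ``pseudo-random'' relaxation and reduce finding a sunflower to finding disjoint sets in this relaxed family. Call a $w$-set system $\F$ $\kappa$-\emph{spread} if for every nonempty $T\subseteq X$, at most $\kappa^{-|T|}|\F|$ members of $\F$ contain $T$. The intuition is that spreadness is a probabilistic proxy for the regularity a sunflower exhibits deterministically, and it is much easier to guarantee in a large set system.

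I would then proceed in two stages. First, given $\F$ with $|\F|\ge c^w$, iteratively strip away any small set $T$ contained in more than $\kappa^{-|T|}|\F|$ members of the current family, building up a candidate kernel $K$ one piece at a time and passing to the link $\F_K=\{S\setminus K:K\subseteq S\in\F\}$. Either the process encounters $r$ pairwise disjoint such $T$'s and assembles an $r$-sunflower outright, or it terminates with a $\kappa$-spread family of $w$-sets that is still exponentially large in $w$. Second, and this is the crux, one must show that any sufficiently large $\kappa$-spread family contains $r$ pairwise disjoint sets, which together with $K$ form the desired sunflower.

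For this second step my strategy would be a random restriction argument: select $W\subseteq X$ by including each element independently with probability roughly $1/\kappa$, and use spreadness to bound the probability that no $S\in\F$ satisfies $S\subseteq W$. Spreadness directly controls the first and second moments of $|\{S\in\F:S\subseteq W\}|$, so a Paley--Zygmund type estimate or a Lov\'asz Local Lemma argument should produce such an $S$ with positive probability; running the same argument on $r$ disjoint random pieces of $X$ yields $r$ disjoint petals.

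The main obstacle is the quantitative dependence of $\kappa$ on $w$. A naive union bound over the exponentially many subsets of a $w$-set costs a polynomial factor in $w$, which forces $\kappa=\Omega(\log w)$ and yields only a bound of the form $(\log w)^{O(w)}$ instead of the conjectured $c^w$. Driving $\kappa$ down to an absolute constant depending only on $r$ would require a substantially tighter analysis of how random restrictions interact with spread families, and I expect that closing this logarithmic gap is precisely where the plan will stall short of the full conjecture, even though it should already be enough to improve dramatically on the classical $w!\cdot(r-1)^w$ bound of \Cref{lemma:sunflower}.
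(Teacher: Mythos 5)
You are right that this approach cannot prove \Cref{conj:sunflower} as stated; the conjecture remains open, and the paper itself only establishes the weaker bound $(Cr^3\log w\log\log w)^w$ of \Cref{thm:sunflower}. Your dichotomy between the ``structured'' case (a heavy link, handled by passing to $\F_K$) and the ``pseudo-random'' case ($\kappa$-spread) is exactly the skeleton of \Cref{lemma:weight_profile_robust_sunflower}, so the overall architecture matches the paper.

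However, there is a genuine gap in the second stage that makes your quantitative claim too optimistic. You propose to show directly, via a Paley--Zygmund or Janson-type second-moment argument, that a $\kappa$-spread family with $\kappa=O(\log w)$ is likely to have some $S\subseteq W$ when $W$ is a random $\Theta(1/\kappa)$-biased subset. This direct moment computation is essentially the paper's \Cref{lemma:final}, and it only succeeds when $\kappa=\Omega(w/\alpha)$: the variance term $\Delta$ picks up a $\binom{w}{\ell}\kappa^{-\ell}$ factor from links of size $\ell$, and controlling the sum forces $\kappa\gtrsim w$. So without more work you land back at $w^{O(w)}$, not $(\log w)^{O(w)}$. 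The paper's key new ingredient, which you do not mention, is the iterated reduction step of \Cref{lemma:reduction}: an encoding argument in the style of Razborov's proof of the switching lemma shows that after a random restriction by $W\sim\U(X,p)$ with $p\approx 1/\log w$, almost every $S\in\F$ is ``covered'' by some $S'\in\F$ with $|S'\setminus W|\le(1-\eps)w$, and the resulting narrower family remains nearly as spread as $\F$. Iterating this $O(\log w)$ times shrinks the width to $O(\log\log w+\log(1/\beta))$, at which point the direct Janson argument finally applies with $\kappa=(\log w)^{1+o(1)}$. This width-reduction loop is precisely what closes the gap between $w$ and $\log w$ in the exponent, and it is absent from your plan; the LLL is not used at all. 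Your diagnosis of the $\log w$ barrier is correct, but the reason for it is the union bound over the $O(\log w)$ rounds of restriction, not over subsets of a $w$-set.
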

The bound in \Cref{lemma:sunflower} is of the form $w^{w(1+o(1))}$ where the $o(1)$ term depends on $r$. Despite nearly 60 years of research, the best known bounds were still of the form $w^{w(1+o(1))}$, even for $r=3$.  Kostochka \cite{kostochka1997bound} proved that any $w$-set system of size $|\F| \ge c w! \cdot (\log \log \log w / \log \log w)^w$ must contain a $3$-sunflower for some absolute constant $c$. Recently, Fukuyama \cite{fukuyama2018improved} claimed an improved bound of $w^{(3/4+o(1))w}$ for $r=3$, but this proof has not been verified.

In this paper, we vastly improve the known bounds. We prove that any $w$-set system of size
$(\log w)^{w(1+o(1))}$ must contain a sunflower. More precisely, we obtain the following:

\begin{theorem}[Main theorem, sunflowers]
\label{thm:sunflower}
Let $r \ge 3$. For some constant $C$, any $w$-set system $\F$ of size $|\F| \ge (Cr^3 \log w \log \log w)^w$ contains an $r$-sunflower.
\end{theorem}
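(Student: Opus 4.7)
The strategy is a structural dichotomy on $\F$ that either reduces to a smaller uniformity or produces the sunflower directly via a random restriction. Set $R := C r^3 \log w \log\log w$, so the hypothesis reads $|\F| \ge R^w$. Call a $w$-uniform family $\F$ on a ground set $X$ \emph{$R$-spread} if for every nonempty $Y \subseteq X$ the number of $S \in \F$ with $Y \subseteq S$ is at most $|\F| R^{-|Y|}$. (A standard preprocessing step lets us assume $\F$ is exactly $w$-uniform, by grouping sets of common size.) I will argue by induction on $w$, splitting into a non-spread case and a spread case.

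In the non-spread case, a witness set $Y$ furnishes a link family $\F_Y := \{S \setminus Y : S \in \F,\ Y \subseteq S\}$, which is $(w-|Y|)$-uniform of size at least $|\F| R^{-|Y|} \ge R^{w-|Y|}$. The inductive hypothesis (the bound decreases with $w$) supplies an $r$-sunflower $T_1,\dots,T_r$ in $\F_Y$ with some kernel $K'$, and adjoining $Y$ to each $T_i$ yields an $r$-sunflower in $\F$ with kernel $Y \cup K'$.

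In the spread case I appeal to the main technical covering lemma: for $R$ of the above form, a uniformly random subset $W \subseteq X$ of density $1/r$ satisfies $\Pr[\exists S \in \F : S \subseteq W] > 1 - 1/r$. Now sample a uniformly random ordered partition $X = W_1 \sqcup \cdots \sqcup W_r$ into $r$ equal parts. By a union bound, with positive probability every $W_i$ contains some $S_i \in \F$; since the $W_i$ are pairwise disjoint, so are the $S_i$, and hence $S_1,\dots,S_r$ form an $r$-sunflower with empty kernel.

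The crux is the covering lemma. The natural route is an encoding/coupling argument in which random coordinates of $X$ are revealed in successive batches and the spread property is used to bound the measure of sets that still ``survive'' uncovered, showing that the expected encoding cost of a surviving set is smaller than its information content and hence that none can survive with positive probability. A naive execution of this loses a factor of roughly $\log w$ at each of $\Theta(\log w)$ rounds of refinement; avoiding a second $\log w$ factor in the final bound requires truncating the recursion after roughly $\log\log w$ rounds and handling the residual short sets by a direct pigeonhole, which is the ultimate source of the $\log\log w$ in $R$. Threading the density $1/r$ through the union bound and the spread requirement accounts for the polynomial factor in $r$, and sharpening the constants here is where most of the technical effort will be concentrated.
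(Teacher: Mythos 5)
Your plan follows the same route as the paper's argument. The spread/non-spread dichotomy with induction in the non-spread case is exactly Lemma~\ref{lemma:weight_profile_robust_sunflower}; the ``covering lemma'' for spread families is Theorem~\ref{thm:kappa_bound} specialized to $\alpha=\beta=1/r$; and the random $r$-partition plus union bound to extract $r$ pairwise disjoint sets is Lemma~\ref{lemma:satisfying_contains_disjoint}. The paper phrases the intermediate step in terms of ``robust sunflowers'' in order to prove the stronger Theorem~\ref{thm:robust_sunflower}, but for Theorem~\ref{thm:sunflower} itself the argument is as you describe, and your sketch of the encoding/random-restriction reduction correctly captures the crux of Lemma~\ref{lemma:reduction}.

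Two caveats in the technical sketch. First, the ``residual short sets'' cannot be finished off by a ``direct pigeonhole.'' A first-moment count shows the expected number of $S\in\F$ contained in $W$ is large, but that alone does not bound $\Pr[\text{no }S\subset W]$ away from $1$, since the events $\{S\subset W\}$ are positively correlated when sets overlap. The paper uses Janson's inequality (Lemma~\ref{lemma:final}), and the spread hypothesis is used exactly to bound the correlation term $\Delta$; some second-moment-type inequality is genuinely needed here. Second, the round-count bookkeeping is off: the paper does not truncate after $\log\log w$ rounds. It reduces the width by a $1/\log\log w$ fraction per round, for roughly $\log w\cdot\log\log w$ rounds, down to a residual width $w^*=\Theta(\log\log w + \log(1/\beta))$. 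The $\log\log w$ in $\kappa$ then arises from the interaction of two constraints: the step size $\eps$ must be as small as $1/\log\log w$ so that $\kappa^{1-\eps}\approx\kappa$ for polylogarithmic $\kappa$, and the per-round restriction density must be $p\approx\alpha\eps/\log w$, forcing $\kappa\gtrsim1/p$. Neither of these is captured by ``truncate after $\log\log w$ rounds,'' though the spirit of balancing a recursion against a direct terminal argument is right.
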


We will implicitly assume throughout the paper that $\log\log w > 0$. Formally, to handle the case of $w=2$, we interpret $\log$ as logarithm in base $1.9$. 

\subsection{Robust sunflowers}

We consider a ``robust'' generalization of sunflowers, the study of which was initiated by Rossman \cite{Rossman14}, who was motivated by questions in complexity theory. Later, it was studied by Li, Lovett and Zhang \cite{li2018sunflowers} in the context of the sunflower conjecture.

First, we define a more ``robust'' version of the property of having disjoint sets. Given a finite set $X$, we denote by $\U(X,p)$ the distribution over subsets $R \subset X$, where each element $x \in X$ is included in $R$ independently with probability $p$
(this is sometimes referred to as a ``$p$-biased distribution'').

\begin{definition}[Satisfying set system]
Let $0 < \alpha,\beta < 1$. A set system $\F$ on $X$ is $(\alpha,\beta)$-satisfying if
$$
\Pr_{R \sim \U(X,\alpha)}[\exists S\in \F, S\subset R] > 1-\beta.
$$
\end{definition}

The explanation for the name ``satisfying'' is that if the set system is interpreted as a disjunctive normal form (DNF) formula, then this condition is that the formula has more than a $1-\beta$ probability of being satisfied  on $\alpha$-biased inputs; see \cite{lovett2019dnf}. As mentioned before, the property of being satisfying is a robust analogue of the property of having disjoint sets.

\begin{lemma}[{\cite[Claim 14]{lovett2019dnf}}]
\label{lemma:satisfying_contains_disjoint}
If $\F$ is a $(1/r,1/r)$-satisfying set system and $\emptyset\notin\F$, then $\F$ contains $r$ pairwise disjoint sets.
\end{lemma}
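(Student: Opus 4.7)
The natural plan is to produce $r$ disjoint sets simultaneously via a single probabilistic experiment, rather than to build them one at a time. The key trick is to realize that a $1/r$-biased random subset of $X$ arises as one cell of a uniformly random partition of $X$ into $r$ parts: partition $X = B_1 \sqcup \cdots \sqcup B_r$ by assigning each element of $X$ independently to a uniformly chosen index in $\{1,\ldots,r\}$. Then each $B_i$ marginally has the distribution $\U(X,1/r)$, while by construction the $B_i$ are pairwise disjoint.

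With the partition in hand, I apply the $(1/r,1/r)$-satisfying hypothesis separately to each $B_i$. By the hypothesis, for every $i$,
$$
\Pr\bigl[\not\exists\, S \in \F,\ S \subseteq B_i\bigr] < \tfrac{1}{r}.
$$
A union bound over $i = 1,\ldots,r$ then yields
$$
\Pr\bigl[\exists\, i \text{ such that no } S\in\F \text{ satisfies } S\subseteq B_i\bigr] < r \cdot \tfrac{1}{r} = 1.
$$
Hence with positive probability there exist $S_1,\ldots,S_r \in \F$ with $S_i \subseteq B_i$ for every $i$. Because the $B_i$ are pairwise disjoint, so are the $S_i$. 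The assumption $\emptyset \notin \F$ ensures each $S_i$ is nonempty and therefore the $S_i$ are distinct, giving the required $r$ pairwise disjoint sets in $\F$.

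The argument is essentially a one-line coupling plus a union bound, so there is no substantial obstacle once the right coupling is chosen. The only conceptual point worth flagging is that the $r$ marginals $B_i \sim \U(X,1/r)$ are \emph{not} independent (they partition $X$), but independence is not needed: the union bound uses only the marginal distributions. The role of the assumption $\emptyset\notin\F$ is both to rule out the trivial case that makes the $(1/r,1/r)$-satisfying condition vacuous and to guarantee distinctness of the chosen $S_i$ in the conclusion.
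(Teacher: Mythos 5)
Your proof is correct and is essentially identical to the paper's: the random $r$-partition of $X$ (equivalently, a uniformly random $r$-coloring) coupled with a union bound over the $r$ cells is exactly the argument used there. The only cosmetic difference is that you spell out the role of $\emptyset\notin\F$ in guaranteeing distinctness of the chosen sets, which the paper leaves implicit.
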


\begin{proof}
Let $\F$ be a set system on $X$.
Consider a random coloring of elements of $X$ with $r$ colors, so that each $x \in X$ has an equal probability of being colored with any of the $r$ colors, independently of the other elements.
For $1 \le i \le r$, let $Y_i$ denote the subset of $X$ colored with color $i$ and let $\mathcal E_i$ denote the event that $\F$ contains an $i$-monochromatic set, namely,
$$
\mathcal E_i = \left[ \exists S \in \F, S \subset Y_i \right].
$$
Note that $Y_i \sim \U(X,1/r)$, and since we assume $\F$ is $(1/r,1/r)$-satisfying, we have
$$
\Pr[\mathcal E_i] > 1-1/r.
$$
By the union bound, with positive probability all of $\mathcal E_1,\ldots,\mathcal E_r$ hold. In this case, $\F$ contains a set which is $i$-monochromatic
for each $i=1,\ldots,r$. Such sets must be pairwise disjoint.
\end{proof}

Given a set system $\F$ on $X$ and a set $T \subset X$, the \emph{link} of $\F$ at $T$ is
$$
\F_T = \{S \setminus T: S \in \F, T \subset S\}.
$$

We now formally define a \emph{robust sunflower} (which was called a \emph{quasi-sunflower} in \cite{Rossman14} and an \emph{approximate sunflower} in \cite{lovett2019dnf}).

\begin{definition}[Robust sunflower]
Let $0 < \alpha,\beta < 1$, $\F$ be a set system, and let $K = \bigcap_{S \in \F} S$ be the common intersection of all sets in $\F$. $\F$ is an $(\alpha,\beta)$-robust sunflower if (i) $K \notin \F$, and (ii) $\F_K$ is $(\alpha,\beta)$-satisfying. We call $K$ the \emph{kernel}.
\end{definition}

The kernel is not allowed to appear in the set system for technical reasons; we do not want a set system consisting of a single set to be a robust sunflower.  In order to circumvent this technical obstacle, when relevant we will ensure that the set systems we work with are $w$-uniform for some $w$; i.e., all sets in the system will be of size $w$.  Any $w$-set system can be turned into a $w$-uniform system by adding $w-|S|$ distinct dummy elements to the sets $S$ of the system with $|S|<w$ so that no dummy element appears in more than one set.  

Robust sunflowers are a generalization of satisfying set systems. In particular, an $(\alpha,\beta)$-satisfying $w$-uniform system $\F$ will be an $(\alpha,\beta)$-robust sunflower as long as $\F$ contains at least two distinct sets, because if $R \supset S$ for some $S \in F$, then clearly $R \supset S \setminus K$.

\begin{lemma}[{\cite[Corollary 15]{lovett2019dnf}}]
\label{lemma:robust_sunflower_contains_sunflower}
Any $(1/r,1/r)$-robust sunflower contains an $r$-sunflower.
\end{lemma}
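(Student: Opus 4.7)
The plan is to reduce directly to \Cref{lemma:satisfying_contains_disjoint} by passing to the link. Let $\F$ be an $(1/r,1/r)$-robust sunflower with kernel $K = \bigcap_{S \in \F} S$. By the definition of robust sunflower, $K \notin \F$ and the link $\F_K = \{S \setminus K : S \in \F\}$ is $(1/r,1/r)$-satisfying. Since $K \notin \F$, every $S \in \F$ strictly contains $K$, so $S \setminus K \ne \emptyset$, i.e.\ $\emptyset \notin \F_K$. This is exactly the hypothesis needed to apply \Cref{lemma:satisfying_contains_disjoint} to $\F_K$ (viewed as a set system on $X \setminus K$).

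Applying that lemma, I get $r$ pairwise disjoint sets $T_1,\ldots,T_r \in \F_K$. Each $T_i$ corresponds to some $S_i \in \F$ with $T_i = S_i \setminus K$, equivalently $S_i = T_i \cup K$, and by construction $T_i \subset X \setminus K$, so $T_i \cap K = \emptyset$.

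The last step is to verify that $S_1,\ldots,S_r$ form an $r$-sunflower with kernel $K$. For any $i \ne j$,
$$
S_i \cap S_j = (T_i \cup K) \cap (T_j \cup K) = (T_i \cap T_j) \cup K = K,
$$
using disjointness of $T_i, T_j$ and $T_i, T_j \subset X \setminus K$. The same calculation extended to all $r$ sets shows $S_1 \cap \cdots \cap S_r = K$, so pairwise intersections all equal the common intersection, which is the definition of an $r$-sunflower.

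There is no real obstacle here: the statement is essentially a packaging of \Cref{lemma:satisfying_contains_disjoint}, with the only subtlety being the bookkeeping check that (i) the link has no empty set (which uses $K \notin \F$), and (ii) the disjoint sets in the link, when adjoined with $K$, produce sets whose pairwise intersections equal exactly $K$ rather than something strictly larger --- this is guaranteed because the link removes $K$ from each set, so the $T_i$ live in $X \setminus K$ and their disjointness translates cleanly to the sunflower condition.
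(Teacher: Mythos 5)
Your proof is correct and follows the paper's argument exactly: pass to the link $\F_K$, observe $\emptyset \notin \F_K$ because $K \notin \F$, invoke \Cref{lemma:satisfying_contains_disjoint} to extract $r$ pairwise disjoint sets, and adjoin $K$ back to obtain the sunflower. The paper states this more tersely, but the extra verification you provide (that the pairwise intersections equal exactly $K$) is the same bookkeeping made explicit.
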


\begin{proof}
Let $\F$ be a $(1/r,1/r)$-robust sunflower, and let $K=\bigcap_{S \in \F} S$ be the common intersection of the sets in $\F$.
Note that by assumption, $\F_K$ does not contain the empty set as an element.
\Cref{lemma:satisfying_contains_disjoint} gives that $\F_K$ contains $r$ pairwise disjoint sets $S_1,\ldots,S_r$. Thus $S_1 \cup K,\ldots,S_r \cup K$ is an $r$-sunflower in $\F$.
\end{proof}

The proof of \Cref{thm:sunflower} follows from the following stronger theorem, by setting $\alpha=\beta=1/r$ and applying \Cref{lemma:robust_sunflower_contains_sunflower}. The theorem verifies a conjecture raised in \cite{lovett2019dnf}, and answers a question of \cite{Rossman14}.

\begin{theorem}[Main theorem, robust sunflowers]
\label{thm:robust_sunflower}
Let $0 < \alpha,\beta < 1$. For some constant $C$, any $w$-uniform set system $\F$ of size $|\F| \ge \left(\frac{C}{\alpha^2} \cdot \left(\log w \log \log w+ \left(\log\frac 1\beta\right)^2\right)\right)^w$ contains an $(\alpha,\beta)$-robust sunflower.
\end{theorem}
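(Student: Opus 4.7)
The plan is to prove \Cref{thm:robust_sunflower} by induction on $w$, reducing to a key ``spread-implies-satisfying'' lemma. Write $M = C\alpha^{-2}\bigl(\log w \log \log w + (\log(1/\beta))^2\bigr)$ and call $\F$ \emph{$M$-spread} if $|\F_T| \le |\F|/M^{|T|}$ for every nonempty $T$. Given a $w$-uniform $\F$ with $|\F| \ge M^w$, if $\F$ fails to be $M$-spread, pick a witness $T \ne \emptyset$: then $\F_T$ is a $(w-|T|)$-uniform family of size $> M^{w-|T|}$, the induction hypothesis supplies an $(\alpha,\beta)$-robust sunflower in $\F_T$ with some kernel $K'$, and adjoining $T$ to $K'$ gives one in $\F$. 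Otherwise $\F$ is $M$-spread; spreadness (with $M \ge 2$) forces $\bigcap_{S \in \F} S = \emptyset$ and $w$-uniformity forces $\emptyset \notin \F$, so it then suffices to prove that $\F$ is $(\alpha,\beta)$-satisfying in order to conclude that $\F$ itself is an $(\alpha,\beta)$-robust sunflower with kernel $\emptyset$.

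The heart of the proof is therefore the key lemma: every $M$-spread $w$-uniform family is $(\alpha,\beta)$-satisfying. I would first establish a base version at bias $1/2$: if $\F$ is $r$-spread with $r = \Omega(\log w \cdot \log \log w)$, then $\Pr_{W \sim \U(X,1/2)}[\exists S \in \F,\, S \subseteq W] \ge 1 - w^{-\Omega(1)}$. The argument is an entropy/encoding one: assuming no $S \subseteq W$, I injectively encode $\F$ as pairs (bits of $W$, short advice) via a greedy process that attempts to build some $S^* \in \F$ inside $W$. Each round reveals the bits of $W$ on a small ``candidate block'' of coordinates; the spread hypothesis then bounds how many sets of $\F$ remain consistent with the revealed information, so short advice pins down the next block. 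Arranging the blocks in geometrically growing sizes keeps the round count at $O(\log w)$ and the per-round advice at $O(\log \log w)$ bits, yielding a total cost that matches $\log |\F|$ exactly when $r$ crosses the claimed threshold.

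To amplify from bias $1/2$ to arbitrary $\alpha$, I would write $R \sim \U(X,\alpha)$ as the intersection of $t \approx \log(1/\alpha)$ independent $1/2$-biased samples $R_1, \ldots, R_t$ and iterate: conditional on $R_1, \ldots, R_i$, the surviving subfamily $\F_i = \{S \in \F : S \subseteq R_1 \cap \cdots \cap R_i\}$ should remain spread with only mild degradation and still large enough for the base lemma to apply against $R_{i+1}$. Union-bounding the $t$ stage failures against $\beta$, together with the gradual weakening of the spread parameter across rounds, is what produces the $\alpha^{-2}$ and $(\log(1/\beta))^2$ terms in $M$. The main obstacle, and where the novelty of the proof lies, is the greedy encoding in the base lemma: the candidate blocks must be chosen adaptively and in the right geometric sizes so that the spread bound tightly governs the number of surviving candidates at every step while the round count stays at $O(\log w)$. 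This blocking scheme is precisely what sharpens the classical $w^{(1+o(1))w}$ bound to $(\log w \log \log w)^{w(1+o(1))}$.
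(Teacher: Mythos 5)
Your outer reduction is exactly the paper's (its Lemma~\ref{lemma:weight_profile_robust_sunflower}): pass to a maximal link if the uniform weighting fails to be spread, otherwise reduce to showing that a spread family is $(\alpha,\beta)$-satisfying, using $M\ge 2$ to rule out a nonempty common intersection. The divergence, and the gap, is in the heart of the argument.

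First, your ``base lemma'' at bias $1/2$ is closer in spirit to Rao's later simplification than to what this paper does, and as stated it is under-specified: you assert that revealing bits of $W$ on adaptively chosen, geometrically growing blocks with $O(\log\log w)$ bits of per-round advice yields an injective encoding, but you don't say what the blocks are, why spreadness bounds the surviving candidates at each round, or why $O(\log w)$ rounds terminate the greedy process. This is exactly where the paper has to work: its encoding argument (Lemma~\ref{lemma:reduction_fixed_W}) is a \emph{single-round} statement --- for $W$ of density $p$, the weight of sets $S$ lacking an $S'\in\F$ with $S'\setminus W\subset S\setminus W$ and $|S'\setminus W|\le w'$ is at most $(4/p)^w s_{w'}$. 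The four-piece code $(W\cup S_i,\ S_i\cap S_j,\ \text{index of }S_i\text{ in }\F_A,\ S_i\cap W)$ is what makes the spread hypothesis bite, via $|\F_A|\le s_{w'}$.

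Second, your amplification step is where the proposal would actually fail. You want to write $R\sim\U(X,\alpha)$ as $R_1\cap\cdots\cap R_t$ with each $R_i\sim\U(X,1/2)$, and argue that $\F_i=\{S\in\F:S\subseteq R_1\cap\cdots\cap R_i\}$ ``remains spread with only mild degradation.'' It doesn't, in any way you can establish without already having the theorem: the sets in $\F_i$ are still size $w$, $|\F_i|$ drops by roughly $2^{-w}$ per round while the links $|(\F_i)_T|$ drop only by $2^{-(w-|T|)}$, so the spread ratio degrades by a factor of $2^{|T|}$ per round \emph{in expectation}, and you have no concentration for either quantity --- proving that $\F_i$ stays spread with high probability is essentially as hard as the original ``spread implies satisfying'' statement. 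The paper sidesteps this entirely by not intersecting but \emph{replacing}: after a single small-bias restriction $W\sim\U(X,p)$ with $p=\alpha/(2r)$, each good $S$ is replaced by $\pi(S)\setminus W$ of size at most $(1-\eps)w$, and the resulting \emph{lower-width} family $\F'$ is shown (directly, via the weight transfer $\sigma'(\F'_T)\le\sigma(\F_T)$) to stay nearly as spread, with only the threshold weight $s_0$ degrading by $(1-\delta)$. Iterating this $O(\log w/\eps)$ times brings the width down to $w^*=O(\log\log w+\log(1/\beta))$, and the argument then finishes not with another encoding but with Janson's inequality (Lemma~\ref{lemma:final}) applied to the small-width, still-spread family. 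That two-phase structure --- width-reduction by encoding, then a second-moment finish --- is entirely absent from your proposal, and it is precisely what produces the $\log w\log\log w$ and $(\log(1/\beta))^2$ terms.
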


For fixed $\alpha,\beta$, 
the bound of $(\log w)^{w(1+o(1))}$ for robust sunflowers in \Cref{thm:robust_sunflower} is sharp; it cannot be improved beyond $(\log w)^{w(1-o(1))}$. We give an example demonstrating this in \Cref{lemma:lower_bound}. 

\subsection{Proof overview}

We now outline our proof of \Cref{thm:robust_sunflower}. First, we define \emph{spread} set systems.

\begin{definition}[Spreadness, \cite{lovett2019dnf}] 
\label{def: protospread} We say that a $w$-set system $\F$ is \emph{$\kappa$-spread} if $|\F| \ge \kappa^w$ and $|\F_T| \le \kappa^{-|T|}|\F|$ for all non-empty $T$, where $|\F_T|$ is the size of the link at $T$. 
\end{definition}

The paper \cite{lovett2019dnf} calls these ``regular set systems'', but we use the more descriptive term ``spread''.  Furthermore, we will ultimately need to slightly generalize the above definition so that it depends on more than one parameter; see \Cref{def: spread_set_system} for more details.

Say $\F$ is a $w$-set system of size $|\F| \ge \kappa^w$ on a ground set $X$. Then either $\F$ is $\kappa$-spread, or there is a link $\F_T$ of size $|\F_T| \ge \kappa^{w-|T|}$. In the latter ``structured'' case, we can simply pass to the link and apply induction, much like in the original proof of Erd\H{o}s and Rado \cite{ErdosR1960}.   

Thus, it suffices to consider the ``pseudorandom'' case of $w$-set systems which are $\kappa$-spread. In \cite{li2018sunflowers, lovett2019dnf}, it was conjectured that for some absolute $C$, a $\kappa$-spread $\F$ is necessarily $(1/3,1/3)$-satisfying if $\kappa \ge (\log w)^C$. We show that there is some $\kappa=(\log w)^{1+o(1)}$ which is sufficient (see \Cref{thm:kappa_bound}), completing the proof of \Cref{thm:robust_sunflower}.  We also show that this value of $\kappa$ is tight, up to the $o(1)$ in the exponent.

We next outline how we obtain the bound on $\kappa$. We will prove that a $\kappa$-spread $w$-set system is $(\alpha,\beta)$-satisfying for appropriate $\kappa, w, \alpha, \beta$ through a series of reductions.  Let $\F$ be a $w$-set system which is $\kappa$-spread. Sample $W \sim \U(X,p)$ for some $p=O(1/\log w)$. We show that with high probability over the choice of $W$, for almost all sets $S \in \F$, there exists a set $S' \in \F$ such that: (i) $S' \setminus W \subset S \setminus W$; and (ii) $|S' \setminus W| \le w'$, for some $w'$ which we will take to be $w(1-\eps)$ for a small $\eps$. We throw out sets $S$ that do not satisfy this property (which we call the \textit{bad} sets for $W$), and replace any $S$ that does with $S' \setminus W$ (we call these the \textit{good} sets for $W$). This yields a $w'$-set system $\F'$ which has almost as many sets as the original $\F$, and therefore will have almost the same spreadness (since the best $\kappa$ for which $|\F| \ge \kappa^T|\F_T|$ and for which $|\F'| \ge \kappa^T|\F_T|$ will be similar).  \footnote{In the language of DNFs, we take a random restriction and approximate the result by a smaller width DNF whose clauses come from removing some variables from clauses of the original DNF.} If we sample $W' \sim \U(X,p')$ and find some $S' \setminus W \in \F'$ such that $S' \setminus W \subset W'$, then $S' \subset W \cup W'$.  Since $W \cup W' \sim \U(x,q)$ for $q=p+p'-pp'$, the satisfyingness of $\F$ will follow from the satisfyingness of $\F'$, although with different values of $w, \kappa, \alpha, \beta$.  We iterate, creating $W \cup W' \cup W'' \cup \cdots$, which will be a random set.  In particular, applying this ``reduction step'' (\Cref{lemma:reduction}) iteratively $t=\log w$ times with the correct parameters yields a set system with much smaller sets, and then we can apply Janson's inequality (see \Cref{lemma:final}) to finish.  In order to obtain our nearly optimal bounds, we must use a slight generalization of spreadness; see \Cref{def: spread_set_system}.  The parameters change throughout the algorithm; we will specify exactly how later.

To conclude, let us comment on how we prove the key reduction step (\Cref{lemma:reduction}). The main idea is to use an encoding argument, inspired by Razborov's proof of H\aa stad's switching lemma \cite{Hastad87,razborov1995bounded}. We show that pairs $(W,S)$ for which $S$ is bad for $W$ can be efficiently encoded, crucially relying on the spreadness condition. This allows to show that for a random $W$ it is very unlikely that there will be many bad sets. This ensures that the $w'$-set system $\F' = \{S' \setminus W: S' \in \F, |S' \setminus W| \le w'\}$ will have almost as many sets as $\F$.

\paragraph{Paper organization.}
We present the proof of \Cref{thm:robust_sunflower} in \Cref{sec:proof_main}. We prove a matching lower bound for robust sunflowers in \Cref{sec:lower_bound}. We discuss some applications in \Cref{sec:applications}. We introduce several conjectures aimed at proving the sunflower conjecture in \Cref{sec:rainbow}.

\section{Proof of \Cref{thm:robust_sunflower}}
\label{sec:proof_main}

We proceed to prove \Cref{thm:robust_sunflower}. Let $\F$ be a set system, and let $\sigma:\F \mapsto \Q_{\ge 0}$ be a weight function that assigns nonnegative rational weights to sets in $\F$ which are not all $0$. A weight function is necessary because in the reduction multiple sets $S$ can be replaced by the same $S'$, and we must weight such $S'$ accordingly.  For simplicity, we do not permit irrational weights. We call the pair $(\F,\sigma)$ a \emph{weighted set system}. For a subset $\F' \subset \F$ we write $\sigma(\F') = \sum_{S \in \F'} \sigma(S)$
for the sum of the weights of the sets in $\F'$. 

A \emph{weight profile} is a vector $\s=(s_0; s_1,\ldots,s_k)$ where $s_0 \ge s_1 \ge \cdots \ge s_k \ge 0$ and $s_0>0$. The $s_i$ need not be rational. We assume implicitly that $s_i=0$ for all $i>k$.

\begin{definition}[Spread weighted set system]
\label{def: spread_set_system}
Let $\s=(s_0;s_1,\ldots,s_w)$ be a weight profile. A weighted set system $(\F,\sigma)$ is $\s$-spread if
\begin{enumerate}[(i)]
\item $\sigma(\F) \ge s_0$;
\item $\sigma(\F_T) \le s_{|T|}$ for any link $\F_T$ with non-empty $T$.
\end{enumerate}
In particular, $\F$ is a $w$-set system.
\end{definition}

\begin{definition}[Spread set system]
Let $\s$ be a weight profile. A set system $\F$ is $\s$-spread if there exists a weight function $\sigma:\F \mapsto \Q_{\ge 0}$ such that $(\F,\sigma)$ is $\s$-spread.
\end{definition}

This generalizes \Cref{def: protospread}; we call a set system $\kappa$-spread if it is $(1;\kappa^{-1},\ldots,\kappa^{-w})$-spread for the uniform weight function.

We note that one may always normalize a weight profile to have $s_0=1$. However, keeping $s_0$ as a free parameter helps to simplify some of the arguments later.

The main idea is to show that set systems which are $\s$-spread, for $\s$ appropriately chosen, are ``random looking'' and in particular must be $(\alpha,\beta)$-satisfying. This motivates the following definition.

\begin{definition}[Satisfying weight profile]
Let $0<\alpha,\beta<1$. A weight profile $\s$ is $(\alpha,\beta)$-satisfying if any $\s$-spread set system is $(\alpha,\beta)$-satisfying.
\end{definition}

The following lemma concerning satisfying weight profiles, which appears in \cite{lovett2019dnf}, underlies our proof of \Cref{thm:robust_sunflower}. We repeat the proof here for completeness.

\begin{lemma}
\label{lemma:weight_profile_robust_sunflower}
Let $0<\alpha,\beta<1$ and $w \ge 2$. Let $\kappa=\kappa(w) > 1$ be a (not necessarily strictly) increasing function of $w$ such that the weight profile $(1;\kappa^{-1},\ldots,\kappa^{-w})$ is $(\alpha,\beta)$-satisfying. Then any $w$-uniform set system $\F$ of size $|\F| > \kappa^w$ must contain an $(\alpha,\beta)$-robust sunflower. 
\end{lemma}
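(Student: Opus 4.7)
The plan is to locate a ``critical'' link $\F_T$ via a greedy maximality choice, show that the common intersection of the sets of $\F$ containing $T$ is exactly $T$, and then verify that $\F_T$ inherits just enough spread structure to be $(\alpha,\beta)$-satisfying by hypothesis. Set $\kappa := \kappa(w)$ and let $\mathcal{T} := \{T : |\F_T| > \kappa^{w-|T|}\}$; the hypothesis $|\F| > \kappa^w$ places $\emptyset$ in $\mathcal{T}$. I would pick $T \in \mathcal{T}$ of maximum cardinality $t$ and set $w' := w - t$. Since $\F$ is $w$-uniform, $|\F_T| \le 1$ whenever $|T| = w$, and because $\kappa > 1$ the strict inequality defining $\mathcal{T}$ rules out $t = w$, so $t < w$.

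The candidate robust sunflower is $\F' := \{S \in \F : T \subseteq S\}$ with kernel $K := \bigcap_{S \in \F'} S \supseteq T$. I would argue $K = T$ by contradiction: if some $x \in K \setminus T$ existed, every set of $\F'$ would contain $x$, so $|\F_{T \cup \{x\}}| = |\F_T| > \kappa^{w - t} > \kappa^{w - t - 1}$, placing $T \cup \{x\}$ in $\mathcal{T}$ and contradicting the maximality of $t$. Since $|K| = t < w$ and $\F'$ is $w$-uniform, $K \notin \F'$, giving condition (i) of the robust sunflower definition.

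For condition (ii), I would equip $\F_T$, viewed as a $w'$-uniform set system, with uniform weights $\sigma(S) := 1/|\F_T|$. Then $\sigma(\F_T) = 1$, and for any non-empty $U$ the maximality of $T$ forces $|\F_{T \cup U}| \le \kappa^{w' - |U|}$, whence $\sigma((\F_T)_U) = |\F_{T \cup U}|/|\F_T| < \kappa^{-|U|}$. Thus $(\F_T, \sigma)$ is $(1; \kappa(w)^{-1}, \ldots, \kappa(w)^{-w'})$-spread. Since $\kappa$ is nonstrictly increasing and $w' \le w$ gives $\kappa(w') \le \kappa(w)$, the very same $\sigma$ also witnesses $(1; \kappa(w')^{-1}, \ldots, \kappa(w')^{-w'})$-spreadness; the hypothesis of the lemma then forces $\F_T = \F'_K$ to be $(\alpha, \beta)$-satisfying.

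The only delicate point is the bookkeeping of strict versus non-strict inequalities: using the strict definition of $\mathcal{T}$ is what simultaneously yields $t < w$ (so $T \notin \F'$) and makes the maximality argument in the kernel step produce a genuine contradiction rather than an equality. Everything else is a direct verification against the definitions of spreadness, robust sunflower, and satisfying weight profile.
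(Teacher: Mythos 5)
Your proof is correct and follows essentially the same strategy as the paper's: locate a maximal ``violating'' set, pass to its link, verify the link is spread under the uniform weighting, and apply the satisfyingness hypothesis. The only cosmetic difference is that the paper splits into two cases (first checking whether $\F$ itself is spread, and only otherwise choosing a maximal nonempty $K$ with $|\F_K| > \kappa^{-|K|}|\F|$), whereas you unify these by allowing $T = \emptyset$ into the maximality search via the absolute threshold $|\F_T| > \kappa^{w-|T|}$; your explicit verification that the kernel equals $T$ is also a small gain in rigor over the paper, which leaves this fact implicit in the spreadness.
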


\begin{proof}
Assume the contrary, and let $\F$ be a $w$-uniform set system on $X$ of size $|\F|>\kappa^w$ without an $(\alpha,\beta)$-robust sunflower. If the uniform distribution $\sigma(S)=1/|\F|$ for $S \in \F$ is $(1;\kappa^{-1},\ldots,\kappa^{-w})$-spread, then $\F$ is $(\alpha,\beta)$-satisfying and thus is an $(\alpha,\beta)$-robust sunflower.

Thus, there exists some nonempty $K \subset X$ so that $|\F_K| > \kappa^{-|K|}|\F|$.  Pick a maximal such $K$. Note that we cannot have $|K|=w$, as otherwise $|\F_K|=1 < \kappa^{-w}|\F|$, and so we must have that $1 \le |K| \le w-1$. Let $\F' = \F_K$. Note that $|\F'|>\kappa^{w-|K|}$; and for any non-empty set $T$ disjoint from $K$, $|\F'_T| = |\F_{K \cup T}| \le \kappa^{-|K|-|T|}|\F| \le \kappa^{-|T|}|\F'|$, by the maximality of $|K|$. Let $\sigma'(S)=1/|\F'|$ for $S \in \F'$. Then $(\F',\sigma')$ is $(1;\kappa^{-1},\ldots,\kappa^{-\ell})$-spread for $\ell=w-|K|$, where $\kappa=\kappa(w) \ge \kappa(\ell)$.  Thus $\F'$ is $(\alpha,\beta)$-satisfying, and hence $\{S \cup K: S \in \F'\}$ is an $(\alpha,\beta)$-robust sunflower contained in $\F$.
\end{proof}

\Cref{lemma:weight_profile_robust_sunflower} motivates the following definition. For $0<\alpha,\beta<1$ and $w \ge 2$, let $\kappa(w,\alpha,\beta)$ be the least $\kappa$ such that $(1;\kappa^{-1},\ldots,\kappa^{-w})$ is $(\alpha,\beta)$-satisfying. \Cref{thm:robust_sunflower} follows by combining \Cref{lemma:weight_profile_robust_sunflower} with the following theorem, which bounds $\kappa(w,\alpha,\beta)$.

\begin{theorem}
\label{thm:kappa_bound}
$\kappa(w,\alpha,\beta) = O\left(\frac{1}{\alpha^2} \cdot \left(\log w \log \log w+ \left(\log\frac 1\beta\right)^2\right)\right)$.
\end{theorem}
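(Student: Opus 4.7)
The plan is to prove \Cref{thm:kappa_bound} by iterating a width-reduction lemma that preserves a generalized spread property, and then finishing with a base case handled by Janson's inequality. Starting from an $\mathbf{s}$-spread $w$-set system $\F$ with $\mathbf{s}=(1;\kappa^{-1},\ldots,\kappa^{-w})$, I would perform $t\approx\log w$ rounds of random restriction, each round shrinking the effective width of the sets by a multiplicative factor $1-\eps$, until the width is small enough that Janson's inequality closes the argument.

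For the reduction lemma, given an $\mathbf{s}$-spread $(\F,\sigma)$ on ground set $X$, I would sample $W\sim\U(X,p)$ with $p=\Theta(1/\log w)$, set $w'=(1-\eps)w$, and define
$$
\F'_W:=\{S_0\setminus W:S_0\in\F,\ |S_0\setminus W|\le w'\}.
$$
I would show that with probability at least $1-\beta_0$ over $W$, the system $\F'_W$ (with induced weights) is $\mathbf{s}'$-spread for a profile $\mathbf{s}'$ only modestly weaker than $\mathbf{s}$, and moreover that all but a small $\sigma$-fraction of $S\in\F$ admit some $S_0\in\F$ with $|S_0\setminus W|\le w'$ and $S_0\setminus W\subseteq S\setminus W$. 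The core tool is a Razborov-style encoding argument: call $S\in\F$ \emph{bad} for $W$ if no such $S_0$ exists; then $|S\cap W|\ge\eps w$, and encoding the pair $(W,S)$ via the large subset $S\cap W\subseteq W$ together with some auxiliary data, combined with the bound $\sigma(\F_T)\le s_{|T|}$ on the link at $T=S\cap W$, limits the number of preimages. This yields a probabilistic bound on the $\sigma$-mass of bad sets, and a union bound over possible link sets $T$ produces the link bounds defining $\mathbf{s}'$-spreadness of $\F'_W$.

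For the base case, once $t$ reductions have brought the width down to $w_t=O(\log(1/\beta))$, I would apply Janson's inequality to the residual $\mathbf{s}_t$-spread system $\F_t$ sampled at rate $\alpha'$. With $\mu=\sum_{S\in\F_t}\alpha'^{|S|}$ and $\Delta=\sum_{S\ne S',\,S\cap S'\ne\emptyset}\alpha'^{|S\cup S'|}$, the spread link bounds give $\Delta=O(\mu)$ by a geometric sum indexed by $|S\cap S'|$, so Janson's inequality yields $\Pr[\nexists S\subseteq R]\le\exp(-\mu^2/(\mu+\Delta))\le\beta_t$. Composing independent restrictions $W_1,\ldots,W_t,R$ at rates $p_1,\ldots,p_t,\alpha'$, their union is distributed as $\U(X,\alpha)$ with $\alpha\approx\sum_ip_i+\alpha'$, so satisfyingness of $\F_t$ lifts to satisfyingness of $\F$ with total failure at most $\sum_i\beta_i+\beta_t$. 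Balancing the $p_i$ and $\beta_i$ so that the net rate is $\alpha$ and the net failure is $\beta$ forces $\kappa=O\bigl(\alpha^{-2}(\log w\log\log w+(\log 1/\beta)^2)\bigr)$.

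The main obstacle will be the reduction lemma, and specifically preserving \emph{every} link bound of $\F'_W$ rather than only the total weight $\sigma(\F'_W)$. This is precisely why the proof must work at the level of weight profiles $\mathbf{s}$ as in \Cref{def: spread_set_system}, rather than a single parameter $\kappa$: the encoding argument inevitably loses a different amount at each link size $|T|$, and these losses must be absorbed into a new profile $\mathbf{s}'$ that is still strong enough to survive the next iteration. Calibrating $\eps$, the restriction rate $p$, and the sequence of weight profiles so that $t$ iterations cost only $O(\log w\log\log w)$ in the final $\kappa$, rather than a polynomially worse amount, is the heart of the quantitative analysis and is where the $\log\log w$ factor naturally emerges from the interplay between the per-round rate $\Theta(1/\log w)$ and the $\log w$ accumulated rounds.
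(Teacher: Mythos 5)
Your high-level plan matches the paper's strategy exactly: iterate a width-reduction step driven by a random restriction and a Razborov-style encoding, then close with Janson's inequality, with the weight profile formalism carrying the bookkeeping across rounds. However, your description of the reduction step contains several substantive errors that would derail an attempted execution.

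First, the badness condition is stated backwards. If $(W,S)$ is bad, then taking $S_0=S$ already fails, so $|S\setminus W|>w'=(1-\eps)w$, which gives $|S\cap W|<\eps w$ --- bad sets are those whose intersection with $W$ is \emph{small}, not ``$\ge\eps w$'' as you write. Consequently, the encoding you sketch (``encoding the pair $(W,S)$ via the large subset $S\cap W\subseteq W$, \ldots combined with the bound $\sigma(\F_T)\le s_{|T|}$ on the link at $T=S\cap W$'') cannot work: $S\cap W$ is small, so the spread bound at $T=S\cap W$ is weak, and it would not pin down $S$ among the sets containing $T$. The paper's actual encoding hinges on a different idea: record $W\cup S_i$, let $j$ be the \emph{minimal} index with $S_j\subset W\cup S_i$ (determined by $W\cup S_i$ alone), and set $A=S_i\cap S_j$. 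Badness of $(W,S_i)$ forces $S_j\setminus W\subset A$ to have size $>w'$, so the link $\F_A$ is tightly bounded by $s_{w'}$; this large set $A$, not $S\cap W$, is the link that makes the encoding efficient. The minimal-index trick is the key combinatorial device you are missing.

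Second, your identified ``main obstacle'' --- that ``the encoding argument inevitably loses a different amount at each link size $|T|$, and these losses must be absorbed into a new profile $\s'$,'' requiring ``a union bound over possible link sets $T$'' --- is a misconception rather than the actual difficulty. In the paper's construction, $\F'=\{\pi(S)\setminus W:S\in\F\setminus\B(W)\}$ with $\sigma'(S'\setminus W)=\sigma(\pi^{-1}(S'))$ preserves \emph{every} link bound deterministically and without loss: since $\pi(S)\setminus W\subset S$ for any good $S$ and any $T\subset X\setminus W$, one has $\sigma'(\F'_T)=\sum_{S:\pi(S)\setminus W\supset T}\sigma(S)\le\sum_{S\supset T}\sigma(S)=\sigma(\F_T)\le s_{|T|}$. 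No union bound over $T$ is needed; the only degraded entry of the profile is $s_0$, which drops by the factor $(1-\delta)$ controlled by the encoding bound on the mass of bad sets. Recognizing that link preservation is free --- and that all the probabilistic work goes into bounding a single quantity, $\E_W[\sigma(\B(W))]$ --- is the observation that makes the whole iteration viable with only a $\log\log w$ loss. Your calibration intuition at the end (roughly $\log w$ rounds at rate about $1/\log w$, yielding a $\log\log w$ factor) is in the right ballpark, though the paper in fact takes $\eps=1/\log\log w$ and hence about $\log w\log\log w$ rounds with per-round rate on the order of $\alpha/(\log w\log\log w)$.
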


To prove \Cref{thm:kappa_bound}
we must show that, if $\kappa$ is the right-hand side of the above expression the following holds. Let $\F$ be
a weighted $w$-set system with total weight $1$ and where the weight of any link of a nonempty set $T$ is at most $\kappa^{-|T|}$.  Then $\F$ is $(\alpha,\beta)$-satisfying, i.e. the chance that some set from $\F$ is contained in a subset of the ground set $X$ sampled an $\alpha$-biased distribution is at least $1-\beta$.   We prove \Cref{thm:kappa_bound} in the remainder of this section. 

\subsection{A reduction step}

Let $\F$ be a $w$-set system on $X$, and fix $w' \le w$. The main goal in this section is to reduce our problem for $\F$ to the same problem for some $w'$-set system $\F'$. We prove the following lemma in this section.

\begin{lemma}
\label{lemma:reduction}
Let $\s=(s_0;s_1,\ldots,s_w)$ be a weight profile, $w' \le w$, $\delta>0$ and define $\s'=((1-\delta) s_0; s_1,\ldots,s_{w'})$. Assume $\s'$ is $(\alpha',\beta')$-satisfying. Then for any $p>0$, $\s$ is $(\alpha,\beta)$-satisfying for
$$
\alpha = p + (1-p)\alpha', \quad \beta = \beta' + \frac{(4/p)^w s_{w'}}{\delta s_0}.
$$
\end{lemma}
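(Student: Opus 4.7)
The plan is to show $(\alpha,\beta)$-satisfyingness of the $\s$-spread system $(\F,\sigma)$ by sampling the $\alpha$-biased set $R$ in two stages: first $W\sim\U(X,p)$, then independently $W'\sim\U(X\setminus W,\alpha')$. Because each $x\in X$ lies in $R=W\cup W'$ with probability $p+(1-p)\alpha'=\alpha$, the union $R$ is distributed as $\U(X,\alpha)$, so it suffices to show that $R$ contains some $S\in\F$ with probability greater than $1-\beta$.

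For each outcome of $W$ I would construct a reduced weighted $w'$-set system on $X\setminus W$ by taking $\F'_W=\{S\setminus W:S\in\F,\ |S\setminus W|\le w'\}$ with induced weights $\sigma'_W(T)=\sum_{S\in\F,\,S\setminus W=T}\sigma(S)$. Any $T\in\F'_W$ with $T\subseteq W'$ produces an $S\in\F$ with $S\subseteq W\cup T\subseteq R$, so once $(\F'_W,\sigma'_W)$ is shown to be $\s'$-spread, the $(\alpha',\beta')$-satisfying hypothesis on $\s'$ immediately gives $\Pr_{W'}[\exists S\in\F,\,S\subseteq R\mid W]>1-\beta'$. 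The link bound (condition (ii) of $\s'$-spreadness) is automatic for every $W$: if $T\subseteq X\setminus W$ is non-empty, then $\sigma'_W(\F'_{W,T})\le\sum_{S\in\F,\,T\subseteq S}\sigma(S)=\sigma(\F_T)\le s_{|T|}$, using that $T$ is disjoint from $W$.

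So the remaining task is the total-weight bound (condition (i) of $\s'$-spreadness) with high probability over $W$. Setting $\B_W=\{S\in\F:|S\setminus W|>w'\}$, we have $\sigma'_W(\F'_W)=\sigma(\F)-\sigma(\B_W)\ge s_0-\sigma(\B_W)$, so condition (i) holds whenever $\sigma(\B_W)\le\delta s_0$, and by Markov's inequality this fails with probability at most $\E_W[\sigma(\B_W)]/(\delta s_0)$. The main technical claim is therefore
\[
\E_W[\sigma(\B_W)]\le (4/p)^w s_{w'},
\]
whose proof is the core of the argument.

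This is where the spread hypothesis is used and is the hardest step. I would prove it by an encoding argument in the spirit of Razborov's proof of H\aa stad's switching lemma: injectively assign to each pair $(W,S)$ with $S$ bad under $W$ a compressed label of the form $(T,\xi)$, where $T\subseteq S\setminus W$ has size at most $w'$ (so that the link bound $\sigma(\F_T)\le s_{w'}$ applies) and $\xi$ is an auxiliary record describing the rest of $W$ cheaply enough that the number of possible $\xi$'s is at most $(4/p)^w$. Summing weights of $(W,S)$ over all encodings yields the claimed bound. Combining all the steps, the probability $R$ covers some set in $\F$ is at least $\bigl(1-(4/p)^w s_{w'}/(\delta s_0)\bigr)(1-\beta')>1-\beta$. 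The main obstacles will be (a) choosing the ``bad'' notion so that the spread hypothesis can be leveraged through its links rather than merely the binomial tail of $|S\setminus W|$, and (b) designing an injective encoding that exploits both the $p$-biased randomness of $W$ and the spreadness of $\F$ in the weighted setting.
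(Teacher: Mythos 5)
Your high-level framework matches the paper exactly: two-stage sampling $W\sim\U(X,p)$ then $W'\sim\U(X\setminus W,\alpha')$, reducing to a $w'$-set system on $X\setminus W$, checking that the link bounds transfer automatically, and controlling the total-weight loss via a Markov argument on $\E_W[\sigma(\B_W)]$. You also correctly identify that the crucial technical claim is $\E_W[\sigma(\B_W)]\le(4/p)^w s_{w'}$ and that it must be proved by a Razborov-style encoding. However, your definition $\B_W=\{S\in\F:|S\setminus W|>w'\}$ is too crude, and with it the claim is simply false. In the iterative application, $p$ is taken much smaller than the shrinkage factor $\eps=1-w'/w$ (in the paper $p=\Theta(\alpha\eps/\log w)$ while $\eps=1/\log\log w$), so for nearly every $S$ a $p$-biased $W$ misses more than an $\eps$-fraction of $S$, meaning $|S\setminus W|>w'$ with probability close to $1$. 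Then $\sigma(\B_W)\approx\sigma(\F)\ge s_0$, dwarfing the required bound $(4/p)^ws_{w'}$. You flag this yourself as ``obstacle (a)'', but flagging it is not the same as resolving it, and the resolution is precisely the main idea of the proof.

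The paper's fix is to declare $(W,S)$ \emph{good} if there exists \emph{some} $S'\in\F$ (possibly $S'\ne S$) with $S'\setminus W\subseteq S\setminus W$ and $|S'\setminus W|\le w'$, and to build $\F'$ from the witnesses $\pi(S)\setminus W$ rather than from $S\setminus W$ itself. This relaxation makes the bad set dramatically smaller, and---more importantly---it is exactly what the encoding needs. For a bad pair $(W,S_i)$: encode $W\cup S_i$ (cost $\approx p^{-w}$ relative to the count of all $W$), let $S_j$ be the first set contained in $W\cup S_i$, and set $A=S_i\cap S_j$. Then $S_j\setminus W\subseteq A$, and \emph{because $(W,S_i)$ is bad} (so $S_j$ cannot serve as a witness) we get $|S_j\setminus W|>w'$, hence $|A|>w'$; now the spread bound gives $|\F_A|\le s_{|A|}\le s_{w'}$ candidates for $S_i$. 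With your naive $\B_W$ there is no analogous way to extract a large cheaply-described subset of $S_i$, which is why the encoding stalls. Relatedly, your sketch says to pick $T\subseteq S\setminus W$ with $|T|\le w'$ ``so that $\sigma(\F_T)\le s_{w'}$'': since the $s_i$ are nonincreasing, that inequality requires $|T|\ge w'$, not $|T|\le w'$; the paper's $A$ has $|A|>w'$, which is what makes the link bound bite.
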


In order to prove this lemma, we need the notions of good and bad sets with respect to some $W \subset X$. 

\begin{definition}

Let $W \subset X$.  Given a set $S \in \F$, the pair $(W,S)$ is said to be \emph{good} if there exists a set $S' \in \F$ (possibly with $S'=S$) such that
\begin{enumerate}[(i)]
\item $S' \setminus W \subset S \setminus W$;
\item $|S' \setminus W| \le w'$.
\end{enumerate}

If no such $S'$ exists, we say that $(W,S)$ is \emph{bad}. 

\end{definition}

Note that if $W$ contains a set in $\F$ (i.e., $S' \subset W$ for some $S' \in \F$) then all pairs $(W,S)$ are good.

\begin{lemma}
\label{lemma:reduction_fixed_W}
Let $(\F,\sigma)$ be an $\s=(s_0;s_1,\ldots,s_w)$-spread weighted set system on $X$. Let $W \subset X$ be a uniformly random subset of size $|W|=p|X|$ and $\B(W) = \{S \in \F: (W,S) \text{ is bad}\}$. Then $\E_W [\sigma(\B(W))] \le (4/p)^w s_{w'}$.
\end{lemma}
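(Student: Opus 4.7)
The plan is to prove the lemma by an encoding (compression) argument in the spirit of Razborov's proof of H\aa stad's switching lemma, as suggested in the proof overview. Rewriting
\[
\E_W[\sigma(\B(W))] = \sum_{S\in\F}\sigma(S)\,\Pr_W[(W,S)\text{ bad}],
\]
I would construct, for each bad pair $(W,S)$, a canonical encoding into a pair $(M,\tau)$, where $M$ is a subset of $X$ of size $w'$ that is contained in some set of $\F$ and $\tau$ is a short auxiliary word. The spread condition then yields $\sigma(\F_M)\le s_{w'}$ for each such $M$, and the total contribution of admissible $(M,\tau)$, after normalizing by the measure of $W$, should sum to at most $(4/p)^w s_{w'}$.

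Concretely, I would build the encoding by the following peeling process. First observe that if $|S\setminus W|\le w'$, then $(W,S)$ is automatically good (take $S'=S$), so every bad pair satisfies $|S\setminus W|>w'$. Next, for a bad pair iteratively attempt to find a small witness: at each step pick the canonical (say lexicographically first) $S^\star\in\F$ with $|S^\star\setminus W|\le w'$ whose ``outside-$W$'' part would lie inside $S$. Since $(W,S)$ is bad, every such attempt must fail, so there is an element $x\in S^\star\setminus(W\cup S)$ certifying the failure, which I record in $\tau$ together with a constant-alphabet disambiguator. After $O(w)$ iterations the process terminates with a canonical core $M$ of size $w'$, and $(W,S)$ can be reconstructed from $(M,\tau)$ up to bounded multiplicity. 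The $(4/p)^w$ factor then emerges by charging each of the $w$ elements of $S$ at most $\log(4/p)$ bits: a factor of $O(1/p)$ for its position within $X$ (matching the inverse probability that any given element lies in a uniform random subset of size $p|X|$) together with a constant for type information.

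The main obstacle is making this encoding precise: (a) specifying the canonical witness $S^\star$ at each step so that it is well-defined and the encoding is invertible; (b) ensuring that each peeling step costs at most $\log(4/p)$ bits while making progress toward $|M|=w'$; and (c) bounding the reconstruction multiplicity from $(M,\tau)$ back to $(W,S)$. The spread condition enters crucially in (a), since the link bound $\sigma(\F_T)\le s_{|T|}$ is what forces the candidate witnesses to be structured enough to be canonically selectable. A further subtlety is that $W$ is uniform over subsets of size exactly $p|X|$ rather than a product $p$-biased distribution; I expect this mild difference to be absorbed into the constant $4$ in $(4/p)^w$, either by a direct combinatorial argument comparing the relevant binomial coefficients or by coupling to the $p$-biased setting.
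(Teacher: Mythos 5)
Your high-level framing (an encoding / compression argument, normalized against $\binom{n}{pn}$, that ultimately charges the spread bound $\sigma(\F_M)\le s_{w'}$ for some $|M|>w'$) is exactly right, and so is your opening observation that a bad pair must have $|S\setminus W|>w'$. But the iterative peeling mechanism you propose is both different from the paper's argument and, as written, broken. You want to start each step by picking the ``lexicographically first $S^\star\in\F$ with $|S^\star\setminus W|\le w'$'' and then argue that badness supplies a witness $x\in S^\star\setminus(W\cup S)$. The problem is that no such $S^\star$ need exist: badness only says there is no $S'$ satisfying \emph{both} $S'\setminus W\subset S\setminus W$ \emph{and} $|S'\setminus W|\le w'$; it is entirely consistent that $|S'\setminus W|>w'$ for \emph{every} $S'\in\F$. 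In that case your process never gets off the ground, yet you still must encode the pair. Beyond that, you have not explained what shrinks at each step, how the process converges to a size-$w'$ core $M$, or how the auxiliary word $\tau$ recovers $W$ --- and you yourself flag all three as open. So this is a genuine gap, not just unfinished bookkeeping.

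The paper's proof sidesteps the iteration entirely with a one-shot encoding. For a bad pair $(W,S_i)$ in the unweighted multiset version, record: (1) the set $W\cup S_i$, which has size in $[pn, pn+w]$ and hence at most $p^{-w}\binom{n}{pn}$ choices; (2) letting $S_j$ be the \emph{minimal-index} set with $S_j\subset W\cup S_i$ --- which always exists since $S_i$ itself qualifies --- record $A=S_i\cap S_j$, a subset of $S_j$, giving at most $2^w$ choices; (3) which element of $\F'_A$ is $S_i$; (4) the subset $S_i\cap W\subset S_i$, at most $2^w$ choices. The crucial point for (3) is that $S_j\setminus W\subset S_i\setminus W$ (since $S_j\subset W\cup S_i$) and $S_j\setminus W\subset S_j$, so $S_j\setminus W\subset A$; badness then forces $|A|\ge|S_j\setminus W|>w'$, so $|\F'_A|\le s_{w'}$. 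Multiplying gives $(4/p)^w s_{w'}\binom{n}{pn}$ bad pairs, and dividing by $\binom{n}{pn}$ gives the claim. Notice that the canonical witness used for reconstruction is the set $S_j\subset W\cup S_i$ with \emph{large} $S_j\setminus W$, not a set with small outside-$W$ part; this inverts the direction of your proposed peeling and is what makes the argument close in one step. Your estimate that each of the $w$ elements ``costs $\log(4/p)$ bits'' is also not how the count actually factors: $p^{-w}$ comes from extending $W$ to $W\cup S_i$, and the two separate $2^w$ factors come from the subsets $A\subset S_j$ and $S_i\cap W\subset S_i$.
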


\begin{proof}
First, we simplify the setting a bit. We may assume by scaling $\sigma$ and $\s$ by the same factor that $\sigma(S)=N_S$ is an integer for each $S \in \F$. Let $N=\sum N_S \ge s_0$. By scaling back down by $N$, we can then identify $(\F,\sigma)$ with the uniform distribution on the multi-set system $\F'=\{S_1,\ldots,S_N\}$, where each set $S \in \F$ is repeated $N_S$ times in $\F'$. Thus
$$
\sigma(\B(W)) = |\{i : S_i\in\F'\text{ and }(W,S_i) \text{ is bad}\}|.
$$

Assume that $(W,S_i)$ is bad in $\F'$. In particular, this means that $W$ does not contain any set in $\F$.
We describe $(W,S_i)$ with a small amount of information. Let $|X|=n$ and $|W|=pn$. We encode $(W,S_i)$ as follows:
\begin{enumerate}
\item The first piece of information is $W \cup S_i$, a subset of $X$ of size between $pn$ and $pn+w$. The number of options for this is at most:
$$
\sum_{i=0}^{w}{\binom n {pn+i}} \le \sum_{i=0}^{w}\left(\frac{1-p}{p}\right)^i{\binom n {pn}} \le \left(\frac{1-p}{p}+1\right)^{w} {\binom n {pn}}=p^{-w}{\dbinom n {pn}}.
$$
\item Given $W \cup S_i$, let $j$ be minimal such that $S_j \subset W \cup S_i$, so that $j$ depends only on $W \cup S_i$. Given $S_j$, there are fewer than $2^w$ possibilities for $A=S_i \cap S_j$.  As such, we will let $A$ be the second piece of information. 
\item Because $S_j \subset W \cup S_i$ and $A = S_i\cap S_j$, we have $S_j \setminus W \subset A$. Since $(W,S_i)$ is bad, $|A|\ge |S_j \setminus W|>w'$.
The number of the sets in $\F'$ which contain $A$ is $|\F'_A| \le s_{w'}$.  The third piece of information will be which one of these is $S_i$.
\item Finally, once we have specified $S_i$, we will specify $S_i \cap W$, which is one of $2^w$ possible subsets of $S_i$.  
\end{enumerate}

From these four pieces of information one can uniquely reconstruct $(W,S_i)$. Thus the total number of bad pairs $(W,S_i)$ is bounded by
$$
p^{-w} {\binom n {pn}} \cdot 2^w \cdot s_{w'} \cdot 2^w = (4/p)^w s_{w'} {\binom n {pn}}.
$$
Because the number of sets $W \subset X$ of size $|W|=p|X|$ is $\binom n {pn},$ the lemma follows by taking expectation over $W$.
\end{proof}

The following is a corollary of \Cref{lemma:reduction_fixed_W}, where we replace sampling $W \subset X$ of size $|W|=p|X|$ with sampling $W \sim \U(X,p)$.

\begin{corollary}
\label{cor:reduction_prob_W}
Let $(\F,\sigma)$ be an $\s=(s_0;s_1,\ldots,s_w)$-spread weighted set system on $X$. Let $W \sim \U(X,p)$ and $\B(W) = \{S \in \F: (W,S) \text{ is bad}\}$. Then $\E_W [\sigma(\B(W))] \le (4/p)^w s_{w'}$.
\end{corollary}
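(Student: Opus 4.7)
The plan is to rerun the encoding argument from \Cref{lemma:reduction_fixed_W} with the probability measure $\Pr[W] = p^{|W|}(1-p)^{n-|W|}$ replacing the uniform measure on size-$pn$ subsets. The encoding $(W, S_i) \mapsto (U, A, S_i, V)$ where $U = W \cup S_i$, $A = S_i \cap S_j$ with $S_j$ the least-indexed set of $\F$ contained in $U$ (so $|A| > w'$), and $V = S_i \cap W$ remains injective with reconstruction $W = (U \setminus S_i) \cup V$.

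The expectation then rewrites as
\[
\E_W[\sigma(\B(W))] = \sum_{(W, S_i) \text{ bad}} \Pr[W]\sigma(S_i) \;\le\; \sum_{(U, A, S_i, V)} \Pr[W(c)],
\]
summed over valid codes $c$. For fixed $U$ and $S_i \subset U$, the inner sum $\sum_{V \subset S_i} \Pr[W]$ collapses to $(1-p)^{n-|U|} p^{|U|-|S_i|}$ via the identity $\sum_V p^{|V|}(1-p)^{|S_i|-|V|} = 1$; the subsequent sum over $U \subset X$ against $p^{|U|}(1-p)^{n-|U|}$ is another marginalization giving $1$. The combinatorial bounds from the lemma's encoding carry over unchanged: at most $2^w$ options for $A$ as a subset of $S_j$ (with $|S_j| \le w$), at most $s_{w'}$ copies $S_i \in \F$ containing $A$ via spreadness (using $s_{|A|} \le s_{w'}$ since $|A| > w'$), and a factor of $p^{-w}$ from the pointwise bound $p^{-|S_i|} \le p^{-w}$. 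Multiplying the pieces yields the claimed $(4/p)^w s_{w'}$ bound.

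A direct averaging attempt $\E_W[\sigma(\B(W))] = \E_m[\E[\sigma(\B(W)) \mid |W| = m]]$ with $m \sim \text{Bin}(n,p)$ would fail, because the lemma's conditional bound $(4n/m)^w s_{w'}$ blows up for small $m$, and Jensen's inequality applied to the convex function $m \mapsto m^{-w}$ goes the wrong way. This is why we must redo the encoding argument directly rather than invoking the lemma as a black box. The main obstacle is the bookkeeping: arranging the order of summation so that the probabilistic marginalizations over $V$ and over $U$ decouple cleanly from the spreadness-based counts, leaving the single factor $p^{-w}$ as the only $p$-dependence on the right-hand side.
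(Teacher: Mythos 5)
Your proof is correct, and it takes a genuinely different route from the paper's. The paper derives \Cref{cor:reduction_prob_W} from \Cref{lemma:reduction_fixed_W} by a limiting argument: embed $X$ in a much larger ground set $X'$, apply the fixed-size lemma there, and let $|X'|\to\infty$ so that the hypergeometric marginal of $W'\cap X$ converges to $\U(X,p)$; since the bound $(4/p)^w s_{w'}$ is uniform in $|X'|$ and $\sigma(\B(\cdot))$ is a bounded function on the fixed finite set $2^X$, the bound passes to the limit. You instead rerun the encoding argument directly against the $p$-biased measure, and your marginalizations over $V$ and $U$ are exactly right; in fact, marginalizing $V$ exactly (rather than paying a factor $2^w$ for it as in the counting version) yields the sharper bound $(2/p)^w s_{w'}$, which you then relax to the claimed $(4/p)^w s_{w'}$. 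Your remark about why a naive conditioning on $|W|=m$ with Jensen's inequality fails is accurate and explains why \emph{some} additional idea is needed; the paper's ``blow up the ground set'' trick is a clean workaround that avoids redoing the encoding, while your direct computation is self-contained and saves a constant. Two minor points to tighten if you write this up: you should state explicitly that you first pass to the integer-scaled multi-set $\F'$ exactly as in the proof of \Cref{lemma:reduction_fixed_W}, so that the bound $|\F'_A|\le s_{w'}$ makes sense as a count; and you should note the order of summation requires $S_i\subset U$ (which the encoding guarantees) so that $|W|=|U|-|S_i|+|V|$.
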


\begin{proof}
The proof is by a reduction to \Cref{lemma:reduction_fixed_W}.
Replace the base set $X$ with a much larger set $X'$ (without changing $\F$, so the new elements do not belong to any set in $\F$). Let $W' \subset X'$ be a uniform set of size $|W'|=p|X'|$, and let $W=W' \cap X$. Then as the size of $X'$ goes to infinity, the distribution of $W$ approaches $\U(X,p)$.\end{proof}

\begin{proof}[Proof of \Cref{lemma:reduction}]
Let $(\F,\sigma)$ be an $\s=(s_0;s_1,\ldots,s_w)$-spread weighted set system on $X$. Let $W \sim \U(X,p)$. Say that $W$ is $\delta$-bad if $\sigma(\B(W)) \ge \delta s_0$. By applying \Cref{cor:reduction_prob_W} and Markov's inequality, we obtain that
$$
\Pr[W \text{ is } \delta\text{-bad}] \le \frac{\E[\sigma(\B(W))]}{\delta s_0} \le \frac{(4/p)^w s_{w'}}{\delta s_0}.
$$
Fix $W$ which is not $\delta$-bad. If for some $S \in \F$, the pair $(W,S)$ is good, then there exists $\pi(S)=S' \in \F$ (possibly with $S'=S$) such that (i) $S' \setminus W \subset S \setminus W$ and (ii) $|S' \setminus W| \le w'$.

Define a new weighted set system $(\F',\sigma')$ on $X'=X \setminus W$ as follows:
$$
\F' = \{\pi(S) \setminus W: S \in \F \setminus \B(W)\}, \quad \sigma'(S' \setminus W) = \sigma(\pi^{-1}(S')).
$$
We claim that $\F'$ is $\s'=((1-\delta)s_0;s_1,\ldots,s_{w'})$-spread. To see that, note that $\sigma'(\F')=\sigma(\F \setminus \B(W)) \ge (1-\delta)s_0$ and that for any set $T \subset X'$, 
$$
\sigma'(\F'_T) = \sum_{S' \supset T} \sigma'(S') = \sum_{S: \pi(S) \supset T} \sigma(S) \le \sum_{S \supset T} \sigma(S) = \sigma(\F_T) \le s_{|T|}.
$$ 
Finally, all sets in $\F'$ have size at most $w'$. Hence, by the assumption $\s'$ is $(\alpha',\beta')$-satisfying. Thus, if we choose $W' \sim \U(X',\alpha')$ then we obtain that with probability more than $1-\beta'$, there exists $S^* \in \F'$ such that $S^* \subset W'$. Recall that $S^*=S \setminus W$ for some $S \in \F$. Thus $S \subset W \cup W' \sim \U(X,p+(1-p)\alpha')$. The value of $\beta$ accounts for the case where $W$ is $\delta$-bad.\end{proof}

\subsection{A final step}

Repeated applications of the reduction step yield a set system which is almost as spread as the original set system, but whose sets are much smaller.  Thus the spreadness guarantee becomes good compared to the size of the sets in the system. In this case, we can just directly show that the set system is satisfying. A similar argument appears in \cite{Rossman14}.

\begin{lemma}
\label{lemma:final}
Let $0<\alpha,\beta<1$, $w \ge 2$, and set $\kappa=\max\left\{4\log(1/\beta),2\right\} \cdot w / \alpha$.
Let $(\F,\sigma)$ be an $\s=(s_0;s_1,\ldots,s_w)$-spread weighted set system where $s_i < \kappa^{-i} s_0$. 
Then $\F$ is $(\alpha,\beta)$-satisfying.
\end{lemma}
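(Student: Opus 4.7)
My plan is to apply a weighted form of Janson's inequality to the random variable $Z(R) = \sigma(\F(R)) = \sum_{S \in \F,\, S \subset R} \sigma(S)$, where $R \sim \U(X,\alpha)$; I want to show $\Pr[Z=0] < \beta$. To derive the weighted form, I scale $\sigma$ so that it takes rational values $\sigma(S) = N_S/M$ with common denominator $M$, duplicate each $S \in \F$ exactly $N_S$ times in a multiset, apply the standard Janson inequality to the (increasing) events $A_S = [S \subset R]$ in this multiset, and let $M \to \infty$. The limiting bound is
\[
\Pr[Z=0] \le \exp\!\left(-\frac{\mu^2}{\mathcal{D}}\right),
\]
where $\mu = \E[Z] = \sum_S \sigma(S)\alpha^{|S|}$ and $\mathcal{D} = \sum_{S,T \in \F :\ S \cap T \neq \emptyset} \sigma(S)\sigma(T)\alpha^{|S \cup T|}$ is the ordered-pair sum (including the diagonal $S = T$).

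The task then reduces to showing $\mu^2/\mathcal{D} \ge \log(1/\beta)$. For the lower bound on $\mu$: since every $|S| \le w$ and $\sigma(\F) \ge s_0$, we have $\mu \ge \alpha^w s_0$. For the upper bound on $\mathcal{D}$: group the sum by $A = S \cap T$ and set $h(A) = \sum_{S \supseteq A} \sigma(S)\alpha^{|S|}$; each non-empty $A$ contributes at most $\alpha^{-|A|}h(A)^2 \le \alpha^{|A|}s_{|A|}^2$ via $h(A) \le \alpha^{|A|}\sigma(\F_A) \le \alpha^{|A|}s_{|A|}$. Aggregating over $A$ of size $k$ using the identity $\sum_{|A|=k}\sigma(\F_A) = \sum_S \sigma(S)\binom{|S|}{k} \le \binom{w}{k}\sigma(\F)$, the decay $s_k < \kappa^{-k}s_0$, and the elementary estimate $\sum_{k \ge 1}\binom{w}{k}(\alpha\kappa)^{-k} \le 2w/(\alpha\kappa)$ (valid whenever $\alpha\kappa \ge 2w$, which our hypothesis guarantees), one obtains in the $w$-uniform setting $\mathcal{D} \le 2w\,\alpha^{2w-1}s_0^2/\kappa$. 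Hence $\mu^2/\mathcal{D} \ge \alpha\kappa/(2w) \ge 2\log(1/\beta)$, using $\alpha\kappa \ge 4w\log(1/\beta)$, and so $\Pr[Z=0] \le e^{-2\log(1/\beta)} = \beta^2 \le \beta$.

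The main obstacle I foresee is a clean treatment of the non-$w$-uniform case. In general the bound $h(A) \le \alpha^{|A|}s_{|A|}$ is loose when sets $S \supseteq A$ are strictly larger than $A$, since we lose the factor $\alpha^{|S|-|A|}$ per such set; the gap between $\alpha\kappa/(2w)$ and the general bound $\mu\kappa/(2ws_0)$ is precisely this looseness. One natural remedy is to reduce to the $w$-uniform case by padding each $S \in \F$ of size $|S| < w$ with $w - |S|$ fresh dummy elements, disjoint across distinct sets, producing a $w$-uniform family $\F'$ on an enlarged ground set; one must then verify that $(\F',\sigma)$ remains essentially $\s$-spread (the nontrivial point being links $\F'_T$ where $T$ meets the dummy region of a single $S$, which collapse to at most one set) and that $(\alpha,\beta)$-satisfyingness of $\F'$ transfers back to $\F$ via the projection $R = R' \cap X$.
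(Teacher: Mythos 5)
Your proposal is correct and follows essentially the same route as the paper: scale to integer weights, pass to a $w$-uniform multiset by padding with dummy elements, then apply Janson's inequality with the pairwise correlation term controlled by the spread bound $\sigma(\F_A)\le s_{|A|}<\kappa^{-|A|}s_0$, summing $\sum_{k\ge1}\binom{w}{k}(\alpha\kappa)^{-k}$. One minor slip worth flagging: in the $w$-uniform case your intermediate bound should read $\mathcal{D}\le 2w\,\alpha^{2w-1}s_0\,\sigma(\F)/\kappa$ rather than with $s_0^2$, since $\sigma(\F)\ge s_0$ but need not equal it; pairing this with the exact $\mu=\alpha^w\sigma(\F)$ (rather than just $\mu\ge\alpha^w s_0$) still yields $\mu^2/\mathcal{D}\ge\alpha\kappa/(2w)$, so the conclusion is unaffected.
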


\begin{proof}
Again, we can assume by scaling that $N_S=\sigma(S)$ for $S\in\F$ are all integers. Let $\F'$ be the multi-set system where each $S \in \F$ is repeated $N_S$ times, so that $\sum N_S>\kappa^w$. We may also assume that all sets in $\F'$ have size exactly $w$, by adding different dummy elements to each set of size less than $w$. We take care to scale by a large enough factor so that a negligible amount of weight falls on each dummy element, and so the spreadness hypothesis is preserved. Let $N=\sum N_S \ge s_0$ and let $\F'=\{S_1,\ldots,S_N\}$. If $\sigma'$ is the uniform weighting on sets of $\F'$, then $(\F',\sigma')$ also satisfies the assumption of the lemma.  Note also that for any set $W \subset X$, if $W$ contains a set of $\F'$ then it also contains a set of $\F$, so it suffices to verify the $(\alpha,\beta)$-satisfyingness of $\F'$.

The proof is by Janson's inequality (see for example \cite[Theorem 8.1.2]{alon2004probabilistic}). Let $W \sim \U(X,\alpha)$ and let $\mathcal Z_i$ be the indicator variable for $S_i \subset W$. Denote $i \sim j$ if $S_i,S_j$ intersect. Define
$$
\mu = \sum_i \E[\mathcal Z_i], \qquad \Delta = \sum_{i \sim j} \E[\mathcal Z_i \mathcal Z_j].
$$
We have $\mu = N \alpha^w$. To compute $\Delta$, let $p_{\ell}$ denote the fraction of the $N^2$ pairs $(i,j)$ with $1 \le i,j \le N$ such that $|S_i \cap S_j|=\ell$. Then
$$
\Delta = \sum_{\ell=1}^w p_{\ell} N^2 \alpha^{2w-\ell}. 
$$
To bound $p_{\ell}$, note that for each $S_i \in \F$, and any $R \subset S_i$ of size $|R|=\ell$, the number of $S_j \in \F$ such that $R \subset S_j$ is $|\F_R| \le N / \kappa^{|R|}$ by the spreadness assumption. Let $q=\max\left\{4\log(1/\beta),2\right\}$. Thus we can bound
$$
\Delta \le \sum_{\ell=1}^w {\binom w \ell} \kappa^{-\ell} N^2 \alpha^{2w-\ell} 
\le \sum_{\ell=1}^w \left(\frac w {\alpha \kappa}\right)^{\ell} \mu^2 = \mu^2\sum_{\ell=1}^w \left(\frac 1q\right)^{\ell} < \frac{2 \mu^2}{q}.
$$
Note that in addition $\Delta \ge \mu$, as we include the pairs $(i,i)$ in the summation for $\Delta$. Thus by Janson's inequality,
$$
\Pr[\forall i, \mathcal Z_i=0] \le \exp\left\{-\frac{\mu^2}{2 \Delta}\right\} < \exp\left\{-\frac q4\right\} \le \beta.
$$
\end{proof}

\subsection{Putting everything together}

We prove \Cref{thm:kappa_bound} in this subsection, where our goal is to bound $\kappa(w,\alpha,\beta)$. We will apply \Cref{lemma:reduction} iteratively,
until we reach a case where we can apply \Cref{lemma:final}. 

Let $w \ge 2$ be fixed throughout, and let $\kappa > 1$, to be optimized later. 
We first introduce some notation. For $0<\Delta<1$ and  $\ell \ge 1$, let $\s(\Delta,\ell)=(1-\Delta; \kappa^{-1},\ldots,\kappa^{-\ell})$ be a weight profile. Let $A(\Delta,\ell), B(\Delta,\ell)$ be such that any $\s(\Delta,\ell)$-spread set system is $(A(\Delta,\ell), B(\Delta,\ell))$-satisfying.

\Cref{lemma:reduction} applied to $w' \ge w''$ and $p,\delta$ shows that we may take 
\begin{align*}
&A(\Delta, w') \le A(\Delta+\delta, w'') + p,\\
&B(\Delta, w') \le B(\Delta+\delta, w'') + \frac{(4/p)^{w'}}{\delta (1-\Delta) \kappa^{w''}}.
\end{align*}
We apply this iteratively for some widths $w_0,\ldots,w_r$. Set $w_0=w$ and $w_{i+1} = \lceil (1-\eps) w_i \rceil$ for some small $\eps$ as long as $w_i>w^*$ for some $w^*$. In particular, we need $w^* \ge 1/\eps$ to ensure $w_{i+1}<w_i,$ and we will optimize $\eps$ and $w^*$ later. The number of steps is thus $r \le (K\log w) / \eps$ for some constant $K > 0$. Let $p_1,\ldots,p_r$ and $\delta_1,\ldots,\delta_r$ be the values we use for $p,\delta$ at each iteration. To simplify the notation,
let $\Delta_i=\delta_1+\cdots+\delta_i$ and $\Delta_0=0$. Furthermore, define
$$
\gamma_i = \frac{(4/p_i)^{w_{i-1}}}{\kappa^{w_i}}.
$$
Then for $i=1,\ldots,r$, we may take
\begin{align*}
&A(\Delta_{i-1}, w_{i-1}) \le A(\Delta_i, w_i) + p_i,\\
&B(\Delta_{i-1}, w_{i-1}) \le B(\Delta_i, w_i) + \frac{\gamma_i}{\delta_i (1-\Delta_{i-1}) }.
\end{align*}

Set $p_i = p= \frac{\alpha}{2r}$ and $\delta_i = \sqrt{\gamma_i}$, where $r\leq(K\log w)/\eps$ is the number of steps. We will select the parameters so that $\Delta_i \le 1/2$ for all $i$. Thus we may take $A(1/2,w^{*})$ and $B(1/2,w^{*})$ such that
\begin{align*}
&A(0, w) \le A(\Delta_r,w_r) + \alpha/2 \le A(1/2, w^*) + \alpha/2,\\
&B(0, w) \le B(\Delta_r,w_r) + 2 \Delta_r \le B(1/2, w^*) + 2 \Delta_r.
\end{align*}
Plugging in the values for $\delta_i$, we compute the sum
\begin{align*}
\Delta_r 
&= \sum_{i=1}^r \delta_i 
\le \sum_{i=1}^r \sqrt{\frac{(4/p)^{w_{i-1}}}{\kappa^{(1-\eps)w_{i-1}}}} 
= \sum_{i=1}^r \left(\frac{4/p}{\kappa^{1-\eps}}\right)^{w_{i-1}/2}
\le \sum_{k \ge w^*} \left( \frac{8K \log w}{\eps \alpha \kappa^{1-\eps}} \right)^{k/2}\\
&=\left( \frac{8K \log w}{\eps \alpha \kappa^{1-\eps}} \right)^{w^*/2} \sum_{k \ge 0} \left( \frac{8K \log w}{\eps \alpha \kappa^{1-\eps}} \right)^{k/2}
\le 2^{1-w^*},
\end{align*}
assuming $\kappa^{1-\eps} \ge (32K\log w)/\eps \alpha$. 

Next, we will apply \Cref{lemma:final} to bound $A(1/2,w^*) \le \alpha/2$ and $B(1/2,w^*) \le \beta/2$. We use the simple observation that $(1/2; \kappa^{-1},\ldots,\kappa^{-w^*})$-spread set systems are also $(1; (\kappa/2)^{-1},\ldots,(\kappa/2)^{-w^*})$-spread, in which case we can apply \Cref{lemma:final} and obtain that we need
$$
\kappa/2 \ge (2+4\log(2/\beta))\cdot 2w^*/\alpha.
$$
Let us now put the bounds together. We still have the freedom to choose $\eps>0$ and $w^* \ge 1/\eps$. To obtain $A(0,w) \le \alpha, B(0,w) \le \beta$, we also need $\Delta_r \le \beta/4 < 1/2$. Thus all the constraints are:
\begin{enumerate}
\item $w^* \ge 1/\eps$;
\item $\kappa^{1-\eps} \ge (32K\log w) / \eps \alpha$ where $K>0$ is a constant;
\item $\kappa \ge  4\cdot(2+4\log(2/\beta))\cdot w^*/\alpha$;
\item $2^{1-w^*} \le \beta/4$.
\end{enumerate}
Set $\eps = 1/\log \log w$ and $w^* = c \cdot \max\left\{\log(1/\beta), \log \log w \right\}$ for some large enough constant $c \ge 1$. This ensures that the first and last conditions hold.

Thus we obtain that the result holds whenever
$$\kappa=\Omega \left( \max\left\{ 
\left(\frac1\alpha\right)^{1+2/\log\log w}\log w \log \log w,
\frac1\alpha\left(1+\log \frac1\beta\right)^2,
\frac1\alpha\left(1+\log \frac1\beta\right)\log\log w
\right\} \right).
$$
In particular, it suffices to set $\kappa = O\left(\frac{1}{\alpha^2} \cdot \left(\log w \log \log w+ \left(\log\frac 1\beta\right)^2\right)\right)$.

\section{A Lower Bound for Robust Sunflowers}
\label{sec:lower_bound}
In this section, we construct an example of a $w$-set system of size $(\log w)^{w(1-o(1))}$ which does not contain a robust sunflower, showing that \Cref{thm:robust_sunflower} is tight up to the $o(1)$ in exponent.

For concreteness we fix $\alpha=\beta=1/2$, but the construction can be easily modified for any other constant values of $\alpha,\beta$.
We assume that $w$ is sufficiently large.

\begin{lemma}
\label{lemma:lower_bound}
There exists a $w$-set system of size $((\log w) / 8)^{w - \sqrt{w}} = (\log w)^{w(1-o(1))}$ which does not contain a $(1/2,1/2)$-robust sunflower.
\end{lemma}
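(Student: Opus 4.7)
The plan is to construct $\F$ as a pruned random family of transversals. Partition $X$ into $w$ blocks $B_1,\ldots,B_w$ of size $d:=\lceil(\log w)/8\rceil$, identify each transversal with a function $f\in[d]^w$ (so $S_f=\{(i,f(i)):i\in[w]\}$), and form $\F_0$ by including each transversal independently with probability $p:=d^{-\sqrt{w}}$, so that $\E[|\F_0|]=d^{w-\sqrt{w}}$. Write $\F_{I,k}:=\{f\in\F:f|_I=k\}$ for $I\subseteq[w]$ and $k:I\to[d]$.

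For any subfamily $\F'\subseteq\F$ with common intersection $K'$, let $m(\F')$ be the number of blocks not contained in $K'$. Two upper bounds on $\Pr_R[\exists S\in\F'_{K'},S\subseteq R]$ with $R\sim\U(X\setminus K',1/2)$ are available: the trivial bound $(1-2^{-d})^{m(\F')}$ (the probability that $R$ hits every free block) and the union bound $|\F'|\cdot 2^{-m(\F')}$. The trivial bound is $\le 1/2$ once $m(\F')\ge m^*:=\lceil(\ln 2)2^d\rceil=\Theta(w^{1/8})$. Therefore, a sufficient condition for $\F$ to contain no $(1/2,1/2)$-robust sunflower is that $|\F_{I,k}|\le 2^{w-|I|-1}$ for every $(I,k)$ with $m:=w-|I|<m^*$: any subfamily $\F'$ with $m(\F')<m^*$ lies in some $\F_{I,k}$ and inherits $|\F'|\le 2^{m(\F')-1}$, making the union bound at most $1/2$; subfamilies with $m(\F')\ge m^*$ are handled by the trivial bound.

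To verify the condition survives after pruning $\F_0$: for fixed $(I,k)$ with $|I|=w-m$, $|\F_{0,I,k}|$ is $\mathrm{Binomial}(d^m,p)$ with mean $d^{m-\sqrt{w}}\ll 1$ for $m\le m^*$, so by a simple moment bound $\Pr[|\F_{0,I,k}|>2^{m-1}]\le(d^{m-\sqrt{w}})^{2^{m-1}+1}/(2^{m-1}+1)!$. Summed against $\binom{w}{m}d^{w-m}$ choices of $(I,k)$, the total expected number of violating pairs is dominated by the $m=1$ term $\sim w\cdot d^{w+1-2\sqrt{w}}$, which is $o(d^{w-\sqrt{w}})$; larger-$m$ contributions decay super-exponentially, using $(wd)^{m^*}\ll d^{\sqrt{w}}$ (equivalently $w^{1/8}\log w\ll\sqrt{w}\log\log w$). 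The expected number of transversals of $\F_0$ contained in any bad $(I,k)$ is therefore $o(|\F_0|)$, and deleting all such transversals yields $\F\subseteq\F_0$ with $|\F|\ge(1-o(1))d^{w-\sqrt{w}}\ge((\log w)/8)^{w-\sqrt{w}}$ and the required structural property.

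The main obstacle is the $m=1$ case, which asks that no two transversals in $\F$ differ in exactly one block---a distance-$\ge 2$ code property---and is what pins down the density $p=d^{-\sqrt{w}}$ and hence the final bound. All larger-$m$ constraints are softer by a wide margin and require no new techniques.
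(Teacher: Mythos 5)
Your proposal is correct, but it takes a genuinely different route from the paper's. The paper constructs $\F$ deterministically: starting from all transversals over $w$ blocks of size $(\log w)/2$, it greedily extracts a subcode of minimum Hamming distance $\eps w = \sqrt{w}$. That distance guarantee alone forces every candidate kernel to leave at least $\sqrt{w}$ free blocks, at which point the single observation that $R$ misses one of those blocks with probability exceeding $1/2$ already rules out every robust sunflower; no union bound and no case split is needed. You instead take smaller blocks of size roughly $(\log w)/8$, sample transversals at random at density $d^{-\sqrt{w}}$, and prune to enforce a spread condition: every link $\F_T$ leaving $m = w - |T| < m^* = \Theta(w^{1/8})$ free blocks has size at most $2^{m-1}$. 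Your analysis then has two prongs --- the miss-a-block bound handles kernels with at least $m^*$ free blocks, while a union bound via the spread condition handles the rest --- and both routes reach the same bound $((\log w)/8)^{w - \sqrt{w}}$. The paper's argument is shorter precisely because the strong minimum-distance property makes the few-free-blocks case vacuous, whereas yours trades a much weaker code property (distance $2$ plus spread) for an extra, but manageable, case analysis; the moment computations you sketch do close, as the first-moment sum is dominated by the small-$m$ terms and the required pruning removes only an $o(1)$ fraction of $\F_0$. One small inaccuracy worth flagging: your closing remark that the $m=1$ case ``pins down'' $p = d^{-\sqrt{w}}$ is not quite right. The $m=1$ constraint only forces $p \ll 1/(wd)$, and even the most binding spread constraint (near $m = m^*$) permits densities of order $\exp(-\Theta(w^{1/8}\log\log w))$, which is far larger than $d^{-\sqrt{w}}$; so your construction actually has considerable slack, and the chosen $p$ is simply a convenient value that matches the bound stated in the lemma. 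This does not affect the correctness of the argument.
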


Let $c \ge 1$ be determined later. Let $X_1,\ldots,X_w$ be pairwise disjoint sets of size $m=\log(w/c)$, and let $X$ be their union. Let $\widehat\F = X_1 \times \cdots \times X_w$ be the $w$-set system consisting of all sets which contain exactly one element from each of the $X_i$. We first argue that $\widehat\F$ is not satisfying.

\begin{claim}
\label{claim:F0}
For $c \ge 1$, $\widehat\F$ is not $(1/2,1/2)$-satisfying.
\end{claim}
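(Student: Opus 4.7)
The plan is to reduce the existence question to a simple event about the intersections $R\cap X_i$. A set $S\in\widehat\F$ is, by construction, a transversal picking exactly one element from each block $X_i$. Therefore the event $\{\exists S\in\widehat\F:\, S\subset R\}$ is equivalent to the event $\{R\cap X_i\ne\emptyset \text{ for every } i=1,\ldots,w\}$: from such an $R$ one can select one element from each non-empty $R\cap X_i$ to produce a transversal, and conversely any transversal forces each block to be hit.

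Next I would exploit disjointness. Because $X_1,\ldots,X_w$ are pairwise disjoint and each coordinate is included in $R$ independently with probability $1/2$, the events $\{R\cap X_i\ne\emptyset\}$ are mutually independent. For each $i$,
\[
\Pr_{R\sim\U(X,1/2)}[R\cap X_i=\emptyset]=2^{-m}=2^{-\log(w/c)}=c/w,
\]
assuming logarithms are base $2$ (as in the size parameter $m=\log(w/c)$). Consequently
\[
\Pr_{R\sim\U(X,1/2)}\bigl[\exists S\in\widehat\F:\, S\subset R\bigr]=\left(1-\tfrac{c}{w}\right)^{w}\le e^{-c}.
\]

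Finally, for $c\ge 1$ the right-hand side is at most $e^{-1}<1/2$, so the satisfying probability is strictly less than $1/2$, and in particular not greater than $1-\beta=1/2$. Hence $\widehat\F$ fails to be $(1/2,1/2)$-satisfying, as claimed. No step here looks like a real obstacle: the heart of the argument is simply the transversal reformulation plus independence of the block-hitting events, and the calibration $m=\log(w/c)$ with $c\ge 1$ is exactly what makes $(1-c/w)^w\le e^{-c}\le 1/2$ go through cleanly.
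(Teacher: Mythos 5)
Your proof is correct and follows essentially the same approach as the paper: identify the satisfying event with the event that $R$ hits every block $X_i$, use disjointness of blocks for independence, and compute $\Pr[R\cap X_i=\emptyset]=2^{-m}=c/w$ so that the satisfying probability is $(1-c/w)^w<1/2$ when $c\ge 1$. The only cosmetic difference is that you pass through the bound $(1-c/w)^w\le e^{-c}$ whereas the paper concludes $(1-c/w)^w<1/2$ directly; both are valid.
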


\begin{proof}
Let $R \sim \U(X,1/2)$. The probability that some $X_i$ is disjoint from $R$ is $1 - (1-2^{-m})^w = 1-(1-c/w)^w>1/2$ if $c \ge 1$.  Thus if $c \ge 1$, with probability more than $1/2,$ $R$ is disjoint from some $X_i$ and thus does not contain any set $S \in \widehat\F$. \end{proof}

The family $\widehat\F$ does contain a $(1/2,1/2)$-robust sunflower. For example, if $T$ contains exactly one element from each of $X_1,\ldots,X_{w-1}$, then $\widehat\F_T$ is isomorphic to $X_w$, and in particular is certainly $(1/2,1/2)$-satisfying.  However, it turns out that the robust sunflowers in $\widehat\F$ all contain large kernels.  We will exploit this to construct a large subsystem of $\widehat\F$ which does not contain a robust sunflower. Let $\eps>0$ be determined later, and choose $\F \subset \widehat\F$ to be a subsystem that satisfies:
$$
|S \cap S'| \le (1-\eps)w, \qquad \forall S,S' \in \F, S \ne S'.
$$
For example, we can obtain $\F$ by a greedy procedure, each time choosing an element $S$ in $\widehat \F$ and deleting all $S'$ such that $|S \cap S'| > (1-\eps) w$. The number of such $S'$ is at most ${\binom w {\eps w}} m^{\eps w} \le 2^w m^{\eps w}$. Hence we can obtain $\F$ of size $|\F| \ge 2^{-w} m^{(1-\eps)w}$. 

\begin{claim}
\label{claim:F1}
For $c \ge 1/\eps$, $\F$ does not contain a $(1/2,1/2)$-robust sunflower.
\end{claim}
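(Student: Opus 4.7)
The plan is to rule out any candidate robust sunflower $\mathcal{G} \subseteq \mathcal{F}$ by showing its kernel is forced to miss at least an $\varepsilon$-fraction of the blocks $X_i$, and then directly estimating the satisfyingness probability on what is left.

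First, I would set $K = \bigcap_{S \in \mathcal{G}} S$ and note that a robust sunflower must satisfy $K \notin \mathcal{G}$, which forces $|\mathcal{G}| \ge 2$. Pick two distinct $S, S' \in \mathcal{G}$. Since every set in $\mathcal{F} \subseteq \widehat{\mathcal{F}}$ meets each $X_i$ in exactly one element, $K$ decomposes as $K = \bigsqcup_{i \in I} (K \cap X_i)$ for some index set $I \subseteq [w]$, with $|I| = |K|$. The separation property built into $\mathcal{F}$ gives $|K| \le |S \cap S'| \le (1-\varepsilon)w$, so the complementary index set $J = [w] \setminus I$ has size $|J| \ge \varepsilon w$. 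This is the key structural consequence of the greedy construction.

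Next, I would analyze $\mathcal{G}_K$. Every element of $\mathcal{G}_K$ is a subset of $X \setminus K$ that picks exactly one element from each $X_j$ with $j \in J$. Thus if $R \sim \mathcal{U}(X \setminus K, 1/2)$ contains some $S \in \mathcal{G}_K$, then in particular $R \cap X_j \ne \emptyset$ for every $j \in J$. Using independence across the disjoint blocks $X_j$ and $|X_j| = m = \log(w/c)$, I would bound
\[
\Pr_R[\exists S \in \mathcal{G}_K,\ S \subseteq R] \le \prod_{j \in J}\bigl(1 - 2^{-m}\bigr) = (1 - c/w)^{|J|} \le e^{-c\varepsilon}.
\]
For $c \ge 1/\varepsilon$ this is at most $e^{-1} < 1/2$, so $\mathcal{G}_K$ fails to be $(1/2,1/2)$-satisfying, contradicting the assumption that $\mathcal{G}$ is a $(1/2,1/2)$-robust sunflower.

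There is no real obstacle here: the construction was designed exactly so that the pairwise intersection bound forces the kernel to leave many blocks untouched, and on those blocks the satisfyingness probability is governed by the block-hitting probability, which was tuned via $m = \log(w/c)$ to be well below $1/2$. The only care needed is to observe that the condition $K \notin \mathcal{G}$ guarantees two distinct sets exist (so the intersection bound applies), and to match the constants: $(1-c/w)^{\varepsilon w} \le e^{-c\varepsilon} \le e^{-1} < 1/2$ when $c\varepsilon \ge 1$.
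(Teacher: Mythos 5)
Your proof is correct and follows essentially the same route as the paper's: you use $K \notin \mathcal{G}$ to extract two distinct sets, invoke the pairwise-intersection bound $|S \cap S'| \le (1-\eps)w$ built into $\F$ to conclude $K$ misses at least $\eps w$ blocks, and then bound the satisfying probability by the probability of hitting every missed block, giving $(1-c/w)^{\eps w} \le e^{-c\eps} < 1/2$ for $c \ge 1/\eps$. The only cosmetic difference is that the paper bounds the link $\widehat\F_K$ of the ambient family rather than $\mathcal{G}_K$ directly, but this changes nothing.
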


\begin{proof}
Consider any set $K \subset X$. We need to prove that $\F$ does not contain a $(1/2,1/2)$-robust sunflower with kernel $K$. If it does, $\F_K$ must contain at least two sets $S$ and $S'$, which implies that $|K \cap X_i| \le |S \cap X_i| \le 1$ for all $i$.  Furthermore $K \subset S \cap S'$ and so $|K| \le |S \cap S'| \le (1-\eps)w$.  However, for such $K$ we claim even $\widehat\F_K$ is not $(1/2,1/2)$-satisfying.

To prove this, let $I=\{i: |K \cap X_i| = 0\}$, so that $|I| \ge \eps w$. Let $R \sim \U(X,1/2)$. The probability that there exists $i \in I$ such that $R$ is disjoint from $X_i$ is $1 - (1-2^{-m})^{|I|} \ge 1 - (1-c/w)^{\eps w}$ which is more than $1/2$ for $c \ge 1/\eps$.
\end{proof}

To conclude the proof of \Cref{lemma:lower_bound} we 
optimize the parameters. Set $c=1/\eps$. We have $|\F| \ge 2^{-w} (\log (\eps w))^{(1-\eps) w}$. Setting $\eps=1/\sqrt{w}$ gives $|\F| \ge ((\log w) / 8)^{w-\sqrt{w}} = (\log w)^{(1-o(1))w}$.

\section{Subsequent Works and Applications}
\label{sec:applications}

After the current work was made available on the ArXiv, Rao \cite{rao2019coding} simplified the proof of \Cref{thm:kappa_bound} using information theoretic techniques, and obtained a slightly better dependence on $\alpha$ and $\beta$. Furthermore, following the initial release of this work, the technique developed in this paper has been used by Frankston, Kahn, Narayanan, and Park~\cite{fknp} to resolve a conjecture of Talagrand in random graph theory. Rao then used their refinements to further improve the bound in \Cref{thm:kappa_bound} to $(C/\alpha) \log(w/\beta)$ for some constant $C$, hence improving the bound in \Cref{thm:robust_sunflower} to $\left((C/\alpha) \log(w/\beta)\right)^w$ and the bound in \Cref{thm:sunflower} to $\left(Cr\log(wr)\right)^w$.
Bell, Chueluecha, and Warnke \cite{bell2021note} observed that a small modification of the argument improves the bound in \Cref{thm:sunflower} further to $\left(Cr\log(w)\right)^w$.

Following Rao's proof using information theory, two more proofs using information theory were given by Tao \cite{tao-blog} and Hu \cite{hu-blog}.
Fox, Pach, and Suk \cite{fox2021sunflowers} studied the sunflower conjecture in the special setting where the set system has bounded VC dimension, and were able to dramatically improve the bounds in this case, getting very close to the conjectured bound.

On the computer science side, \Cref{thm:robust_sunflower} has been used by Cavalar, Kumar, and Rossman~\cite{cavalarmonotone} to improve previous monotone circuit lower bounds.  There are further applications of this work in theoretical computer science; see the extended abstract version of this paper \cite{ALWZstoc} for details.

We present several additional applications of \Cref{thm:kappa_bound} below. To present sharper (and cleaner) bounds, we instead use the version proved by Rao \cite{rao2019coding}, building on the work of Frankston, Kahn, Narayanan, and Park~\cite{fknp}. Using the notations of \Cref{thm:kappa_bound}, he proved that  $\kappa(w,\alpha,\beta) \le (C/\alpha)\log(w/\beta)$ for some constant $C$.

\paragraph{The Erd\H{o}s-Szemer\'edi sunflower conjecture.}
In \cite{es}, the Erd\H{o}s-Rado sunflower conjecture is shown to imply the Erd\H{o}s-Szemer\'edi sunflower conjecture.  Plugging in our new bounds into their argument gives a corresponding improvement. 

\begin{theorem}\label{thm:sunflower_ES_new}
For any $r \ge 3$ there exists $c=c(r)$ such that the following holds. Let $\F$ be a set system on $X$, with $|X|=n$ and $|\F| \ge 2^{n\left(1-c / \log n \right)}$. Then $\F$ contains an $r$-sunflower.
\end{theorem}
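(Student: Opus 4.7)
The strategy is to feed Rao's refinement \cite{rao2019coding} of our \Cref{thm:sunflower} into the black-box reduction of Erd\H{o}s and Szemer\'edi \cite{es}. Rao's version asserts that every $w$-set system of size at least $(Cr\log(wr))^w$ contains an $r$-sunflower, i.e., the sunflower lemma holds with bound $g(w)=(Cr\log(wr))^w$. The reduction in \cite{es} is general: it takes as input any bound of the form ``every $w$-set system of size $\ge g(w)$ contains an $r$-sunflower'' and outputs an upper bound $G(n)$ on the size of the largest $r$-sunflower-free family in $2^{[n]}$, and all our proof has to do is re-run this reduction with the new $g$.

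The first step is to recall the dependence of $G(n)$ on $g$ from \cite{es}. Their reduction combines the sunflower lemma applied to links $\F_T=\{S\setminus T:T\subset S\in\F\}$ and to sub-systems $\F\cap 2^W$ with a double-counting argument, producing a bound that, after optimization over an intermediate width parameter, scales roughly as $G(n)\le 2^n/K^n$ where $K-1$ is controlled by the per-slot cost $g(w)^{1/w}$. With the classical Erd\H{o}s-Rado input $g(w)=w!(r-1)^w$ (per-slot cost $\Theta(w)$), this optimization yields $G(n)=(2-\epsilon_0(r))^n$ for a small constant $\epsilon_0(r)>0$, which is the original Erd\H{o}s-Szemer\'edi theorem.

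The second step is to redo the computation with Rao's improved bound $g(w)=(Cr\log(wr))^w$, whose per-slot cost is only $O(\log(wr))$. Choosing the intermediate parameter $w=\Theta(n)$, so that $g(w)^{1/w}=\Theta(\log n)$, the factor $K$ becomes $1+\Omega(1/(r\log n))$, which after raising to the $n$-th power gives $G(n)\le 2^n\cdot\bigl(1+\Omega(1/(r\log n))\bigr)^{-n}\le 2^{n(1-c/\log n)}$ for an appropriate constant $c=c(r)>0$. This is exactly the claimed bound.

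The main obstacle is to verify that the reduction in \cite{es}, originally written with the specific shape of the classical Erd\H{o}s-Rado bound in mind, scales smoothly under substitution of our qualitatively different $g$. Concretely, one has to check that the optimal intermediate width $w$ remains in a regime where the reduction's bookkeeping is tight, and that no stray $\operatorname{poly}(n)$ factors creep in that would degrade the $1/\log n$ improvement to, say, $1/(\log n)^2$ or $1/\log\log n$. Once this is confirmed, the theorem follows by elementary calculation.
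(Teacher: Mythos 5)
Your approach---feed Rao's refinement $g(w)=(Cr\log(wr))^w$ into the Erd\H{o}s--Szemer\'edi reduction of \cite{es} and re-optimize the intermediate width---is exactly what the paper does; its own ``proof'' of this theorem is essentially just that citation plus the remark that Rao's bound is being used. One small correction to your sanity check: with the classical input $g(w)=w!(r-1)^w$, the ES reduction yields only a polynomial saving $2^n/\mathrm{poly}(n)$, not $(2-\epsilon_0(r))^n$; the exponential saving is ES's \emph{conditional} conclusion under the ER conjecture $g(w)=C^w$, which is in fact what your own per-slot heuristic predicts (per-slot cost $T=\Theta(w)$ with $w=\Theta(n)$ gives $K^n=(1+\Theta(1/n))^n=\Theta(1)$).
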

The Erd\H{o}s-Szemer\'edi sunflower conjecture says the bound on the size of $\F$ can be improved to $2^{n(1-\eps)}$ for some $\eps=\eps(r)$. This was proved for $r=3$ by Naslund \cite{naslund2017upper} using algebraic techniques, which so far do not seem to generalize for larger values of $r$.

\paragraph{Intersecting set systems.}
A set system is \emph{intersecting} if any pair of its sets intersects. We note the following corollary of \Cref{thm:kappa_bound}:

\begin{theorem}
\label{thm:interset}
If $\F$ is an intersecting $w$-uniform set system, and for all $T$, $|\F_T| \le \kappa^{-|T|}|\F|$, then $\kappa=O(\log w)$.
\end{theorem}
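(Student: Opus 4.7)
The plan is to obtain \Cref{thm:interset} as a direct corollary of \Cref{thm:kappa_bound} (or, more precisely, of Rao's sharpened form $\kappa(w,\alpha,\beta)\le (C/\alpha)\log(w/\beta)$ cited at the start of this section). The pivotal observation is that an intersecting set system cannot be $(1/2,1/2)$-satisfying: \Cref{lemma:satisfying_contains_disjoint} applied with $r=2$ would otherwise produce two disjoint members of $\F$, contradicting the intersecting hypothesis. So once we argue that the link bound $|\F_T|\le\kappa^{-|T|}|\F|$ forces $(1/2,1/2)$-satisfyingness whenever $\kappa$ exceeds $\kappa(w,1/2,1/2)$, the conclusion $\kappa=O(\log w)$ drops out.

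To make this precise, I would first assume without loss of generality that $|\F|\ge 2$ and $\kappa\ge 1$, since otherwise the conclusion is vacuous. Assign the uniform weights $\sigma(S)=1/|\F|$ to the sets of $\F$. Then $\sigma(\F)=1$ and $\sigma(\F_T)=|\F_T|/|\F|\le\kappa^{-|T|}$ for every nonempty $T$. This exhibits $(\F,\sigma)$ as $(1;\kappa^{-1},\ldots,\kappa^{-w})$-spread in the sense of \Cref{def: spread_set_system}. If $\kappa>\kappa(w,1/2,1/2)$, then by definition this weight profile is $(1/2,1/2)$-satisfying, and hence so is $\F$. Because $\F$ is $w$-uniform, $\emptyset\notin\F$, so \Cref{lemma:satisfying_contains_disjoint} supplies two disjoint sets of $\F$, giving the desired contradiction.

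Consequently $\kappa\le\kappa(w,1/2,1/2)$, and Rao's bound with $\alpha=\beta=1/2$ yields $\kappa=O(\log w)$. I do not anticipate any real technical obstacle here, since all of the hard work is absorbed into \Cref{thm:kappa_bound}; the only content of the corollary is recognizing the intersecting hypothesis as exactly the contrapositive of $(1/2,1/2)$-satisfyingness. One small point worth flagging is that the weighted-spread formulation of \Cref{def: spread_set_system} requires no lower bound on $|\F|$ beyond nonemptiness, so the link condition alone is enough to invoke \Cref{thm:kappa_bound} directly, with no need to first verify that $|\F|$ is large.
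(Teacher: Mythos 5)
Your proposal is correct and follows essentially the same route as the paper: observe that an intersecting family cannot be $(1/2,1/2)$-satisfying (via \Cref{lemma:satisfying_contains_disjoint} with $r=2$), so by Rao's sharpening of \Cref{thm:kappa_bound} the family cannot be $(C\log w)$-spread, forcing $\kappa = O(\log w)$. The paper's proof is just more terse; you correctly fill in the detail that the uniform weighting realizes the profile $(1;\kappa^{-1},\ldots,\kappa^{-w})$ without needing a separate lower bound on $|\F|$ (which anyway follows from the link condition at $T=S$ for any $S\in\F$ when $\F$ is $w$-uniform).
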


An example from \cite{lovett2019dnf} shows that  for $\kappa=\Omega(\log w/ \log \log w)$, there are intersecting $\kappa$-spread $w$-uniform set systems, so the bound in  \Cref{thm:interset} is close to tight.

\begin{proof} If $\F$ is intersecting then it is not $(1/2,1/2)$-satisfying (apply \Cref{lemma:satisfying_contains_disjoint} for $r=2$). Thus by the improvement of \Cref{thm:kappa_bound} from \cite{rao2019coding}, it cannot be $(C \log w)$-spread for a large enough constant $C$.
\end{proof}

Note that \Cref{thm:interset} is fairly robust in the sense that the bounds on the links of $T$ for large $T$ are required.  For instance, for a small constant $\eps$ a family $\F$ of $2^{n^{\eps}}$ random subsets of $[n]$ of size $w=n^{1/2+\eps}$ is intersecting with high probability.  Also with high probability it is $(1; \kappa, \ldots, \kappa^{-\ell})$-spread for $\ell$ up to $n^{\eps/2}=w^{\Omega(1)}$, where $\kappa=w^{\Omega(1)}$.  This example shows that in \Cref{thm:kappa_bound} the spreadness condition on the large sets is necessary.  Thus it seems difficult to find a purely algebraic or analytic proof of \Cref{thm:kappa_bound}, as one would have to use the spreadness condition on large sets in a useful but non-combinatorial way.

We expect \Cref{thm:interset} to have a fair number of applications. Below we describe two applications: one to packing of Kneser graphs, and the other to the Alon-Jaeger-Tarsi conjecture.

\paragraph{Packing of Kneser graphs.}
Let $n > k \ge 1$. The Kneser graph $\text{KG}(n,k)$ has as vertices all the subsets of $[n]$ of size $k$, and its edges correspond to pairs of disjoint sets.

\begin{theorem}
\label{thm:kpacking}
Let $n > k \ge 1$. If two copies of the Kneser graph $\text{KG}(n,k)$ can be packed into the complete graph on $\dbinom{n}{k}=N$ vertices, then $k=\Omega\left(\frac{n}{(\log n)^2}\right)$.
\end{theorem}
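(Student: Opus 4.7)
The plan is to derive a contradiction via \Cref{thm:interset}. Suppose the packing exists, realized by a bijection $\pi:\binom{[n]}{k}\to\binom{[n]}{k}$ satisfying $A\cap B=\emptyset\implies\pi(A)\cap\pi(B)\neq\emptyset$, and assume for contradiction that $k = o(n/(\log n)^2)$. The goal is to produce a $k$-uniform intersecting family derived from $\pi$ whose spread parameter $\kappa$ exceeds the $O(\log k)$ upper bound of \Cref{thm:interset}, a contradiction.

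The natural construction is to sample a uniformly random matching $M$ of $[n]$ into $k$-blocks, so that $|M| = m := \lfloor n/k\rfloor$, and set $\F := \{\pi(A) : A \in M\}$. Since the blocks of $M$ are pairwise disjoint, the packing property immediately forces $\F$ to be pairwise intersecting, a family of $m$ distinct $k$-subsets of $[n]$. By symmetry of the uniform matching distribution, $\mathbb{E}|\F_T| = m\cdot\binom{n-|T|}{k-|T|}/\binom{n}{k}$, matching the link profile of the ambient $(n/k)$-spread family $\binom{[n]}{k}$. A concentration argument exploiting the negative correlation in uniform random matchings (a Chernoff-type tail bound combined with a union bound over the relevant $T$) then shows that with positive probability, $|\F_T| \leq \kappa^{-|T|}|\F|$ for all non-empty $T$, with $\kappa = \Omega\!\bigl((n/k)/\log n\bigr)$. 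Applying \Cref{thm:interset} (or Rao's improvement from \cite{rao2019coding}), we then conclude $\kappa = O(\log k) = O(\log n)$; combining the two estimates yields $(n/k)/\log n = O(\log n)$, i.e.\ $n/k = O((\log n)^2)$, giving $k = \Omega(n/(\log n)^2)$, contradicting our assumption.

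The main obstacle is uniformly verifying the spread condition for all $|T|$. For $|T|$ close to $k$, the family $\F$ has only $|\F|=m=n/k$ sets, so the condition $|\F_T|\leq \kappa^{-|T|}|\F|$ evaluated at $T = \pi(A) \in \F$ (where $|\F_T|\geq 1$) forces $\kappa \leq m^{1/|T|}$, which approaches $1$ for large $|T|$. To get around this, the argument should be carried out in the weighted framework of \Cref{def: spread_set_system}: weight each set of $\F$ according to the random matching distribution, so that the restrictive large-$T$ condition becomes a bound on the maximum weight rather than a bound governed by $|\F|$. Equivalently, one can average over random matchings to produce a weighted intersecting family whose spread profile carries the estimate $\kappa = \Omega((n/k)/\log n)$ all the way up to $|T|=k$, and then invoke the weighted version of \Cref{thm:interset} to close the argument.
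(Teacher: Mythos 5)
Your plan correctly identifies the central obstacle: the single-matching family $\F_M=\{\pi(A):A\in M\}$ is intersecting but cannot be well-spread for $|T|$ near $k$, since $\F_M$ has only $m=\lfloor n/k\rfloor$ sets and taking $T$ equal to one of them forces $\kappa\le m^{1/k}\approx1$. However, the weighted fix you propose does not close this gap. Averaging over the random-matching distribution produces, as a weighted set system, the family $\{\pi(A):A\in\binom{[n]}{k}\}$ with uniform weights --- simply all of $\binom{[n]}{k}$ relabeled by $\pi$. That family is $\Omega(n/k)$-spread, but it is no longer intersecting, since its support contains every $k$-subset. The intersecting hypothesis is exactly what a (weighted or unweighted) version of \Cref{thm:interset} needs, and it is what ultimately supplies the contradiction: the proof of \Cref{thm:interset} shows a $\kappa$-spread family with $\kappa\gg\log w$ is $(1/2,1/2)$-satisfying and hence, by \Cref{lemma:satisfying_contains_disjoint} with $r=2$, contains two disjoint sets. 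Applied to your averaged family this only yields $A,B$ with $\pi(A)\cap\pi(B)=\emptyset$, with no control over whether $A\cap B=\emptyset$; the packing hypothesis is never used and there is no contradiction. Nor does re-weighting a single $\F_M$ help, since by exchangeability every set of $\F_M$ receives the same weight, so the large-$|T|$ obstruction is unchanged.

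The paper sidesteps this tension with a different encoding that records \emph{both} $v$ and $\pi(v)$. It builds a $2k$-uniform hypergraph $H$ on the $2n$-element ground set $V_0\cup V_1$ (two disjoint copies of $[n]$), with one edge $e(v)=\{(0,i):i\in v\}\cup\{(1,i):i\in\pi(v)\}$ per $v\in\binom{[n]}{k}$. Now $e(u)\cap e(v)=\emptyset$ exactly when $u\cap v=\emptyset$ and $\pi(u)\cap\pi(v)=\emptyset$, so the packing condition makes $H$ intersecting. Meanwhile, for any nonempty $U$ one may assume $|U\cap V_0|\ge|U|/2$, and the fraction of edges containing $U$ is at most $\binom{n-|U|/2}{k-|U|/2}/\binom{n}{k}\le(k/n)^{|U|/2}$, so $H$ is $\sqrt{n/k}$-spread --- deterministically, with no concentration argument. \Cref{thm:interset} (with Rao's sharpening) then gives $\sqrt{n/k}=O(\log k)$, hence $k=\Omega\left(n/(\log n)^2\right)$; the square root coming from the doubled ground set is precisely what produces $(\log n)^2$ rather than $\log n$ in the final bound.
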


\begin{proof} 
Let $G=\text{KG}(n,k)$.
We identify the vertex set $V=V(G)$ with the subsets of $[n]$ of size $k$. By the assumption, there is a bijection $\pi:V \to V$ such that if $(u,v) \in E(G)$ then $(\pi(u),\pi(v)) \notin E(G)$. 

Define a $2k$-uniform hypergraph $H$ as follows. Let $V(H)=V_0 \cup V_1$ where $V_b = \{(b,i): i \in [n]\}$. The edges of $H$ are $E(H)=\{e(v): v \in V\}$ where $e(v)=\{(0,i): i \in v\} \cup \{(1,i): i \in \pi(v)\}$ for $v \in V$.

We claim that $H$ is intersecting. Fix any $u,v \in V$ and consider the corresponding edges $e(u),e(v)$. Observe that by assumption either $(u,v) \notin E(G)$ or $(\pi(u),\pi(v)) \notin E(G)$. Thus either $u,v$ intersect or $\pi(u),\pi(v)$ intersect. In either case, $e(u),e(v)$ intersect.

Hence, by \Cref{thm:interset}, for a large constant $C$ there will be a subset $U \subset V(H)$ so that at least a $(C \log 2k)^{-|U|}$ fraction of the edges of $H$ contain $U$.  Assume without loss of generality that $|U \cap V_0| \ge |U|/2$.  The fraction of the edges of $H$ that contain any subset of $V_0$ of size at least $|U|/2$ is at most 
$$
\frac{\dbinom{n-|U|/2}{k-|U|/2}}{ \dbinom{n}{k}} \le (k/n)^{|U|/2}.
$$  
We thus obtain that $(k/n)^{|U|/2} \ge (C \log 2k)^{-|U|}$ and so $k=\Omega\left(\frac{n}{(\log n)^2}\right)$ as claimed. \end{proof}

We believe that assuming the premise of \Cref{thm:kpacking}, it should be true that $k=\Omega(n)$.  This problem is of independent interest.

\begin{conjecture}
\label{conj:kpacking}
Let $n > k \ge 1$. If two copies of the Kneser graph $\text{KG}(n,k)$ can be packed into the complete graph on $\dbinom{n}{k}=N$ vertices, then $k=\Omega\left(n\right)$.
\end{conjecture}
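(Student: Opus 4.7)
The natural starting point is the intersecting $2k$-uniform hypergraph $H$ built in the proof of \Cref{thm:kpacking}: on vertex set $V_0\sqcup V_1$ with $|V_0|=|V_1|=n$, edges $e(v)=\{(0,i):i\in v\}\cup\{(1,i):i\in\pi(v)\}$, a total of $\binom{n}{k}$ edges, regular of degree $\binom{n-1}{k-1}$ on both sides. The proof of \Cref{thm:kpacking} only uses intersectingness and the spreadness of $H$ before invoking \Cref{thm:interset}; the plan is to recover an additional factor of $(\log n)^2$ by leveraging the bipartite/regularity structure that is thrown away at that step.

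The concrete approach I would pursue is to prove an Erd\H{o}s--Ko--Rado-style bound for \emph{bipartite} intersecting $2k$-uniform hypergraphs in which each edge has exactly $k$ vertices from each side. Such a family is intersecting iff any two edges share a vertex on one of the two sides, which is a strictly stronger constraint than being intersecting as an unrestricted $2k$-uniform hypergraph (where EKR gives the essentially uninformative bound $\binom{2n-1}{2k-1}$). Paired with a Hilton--Milner-type stability statement ruling out ``star''-shaped configurations (which in our setting correspond to $\pi$ essentially fixing a distinguished element), such a bipartite EKR bound should force $|E(H)|$ below $\binom{n}{k}$ whenever $k=o(n)$, closing the argument. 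A complementary angle is double-counting over $u$: for each $u$, the $\binom{n-k}{k}$ images $\{\pi(v):v\cap u=\emptyset\}$ all meet the single set $\pi(u)$, and summed against the bijectivity of $\pi$ this should be incompatible with $k=o(n)$ when one plugs in the precise distribution of intersection sizes.

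The main obstacle is that the $(\log n)^2$ gap is not an artefact of slack in the current argument but a structural feature of the spreadness machinery: by the example noted immediately after \Cref{thm:interset}, the factor of roughly $\log w/\log\log w$ in \Cref{thm:interset} is essentially tight, so iterating any direct spreadness argument on the bipartite structure can at best save one of the two $\log$ factors. Closing the full gap to $k=\Omega(n)$ almost certainly requires a genuinely non-combinatorial ingredient; the most promising such is an interlacing or eigenvalue-ratio argument against the known spectrum $(-1)^j\binom{n-k-j}{k-j}$ of $\text{KG}(n,k)$, since a packing gives an edge-disjoint embedding of $\text{KG}(n,k)$ into its complement and this imposes spectral constraints that are sharpest precisely when $k/n$ is small. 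I would expect any straightforward sharpening of the current proof to stop at $k=\Omega(n/\log n)$, and the final push to $k=\Omega(n)$ to require combining the bipartite EKR step above with such a spectral input.
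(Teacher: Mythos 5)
\Cref{conj:kpacking} is posed by the paper as an open conjecture, not a theorem: the authors prove only the weaker bound $k=\Omega(n/(\log n)^2)$ in \Cref{thm:kpacking} and explicitly leave $k=\Omega(n)$ as something they ``believe'' but cannot prove. There is therefore no paper proof to compare your proposal against. Your write-up honestly reflects this---it is a research plan, not an argument---and it correctly pins down the key obstacle: the $(\log n)^2$ loss is baked into \Cref{thm:interset} (the paper records an example showing $\Omega(\log w/\log\log w)$ is necessary for spreadness alone), so no tightening of the spreadness machinery will reach $\Omega(n)$.

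That said, the concrete steps you gesture at are weaker than they may appear. The double-counting observation---for each $u$, the $\binom{n-k}{k}$ sets $\pi(v)$ with $v\cap u=\emptyset$ are distinct and all meet $\pi(u)$---gives only $2\binom{n-k}{k}<\binom{n}{k}$, hence $k=\Omega(\sqrt n)$, weaker even than \Cref{thm:kpacking}; a much more refined accounting of intersection sizes would be needed to do better. The bipartite Erd\H{o}s--Ko--Rado / Hilton--Milner direction is a reasonable target, but you do not formulate a statement that would actually imply the conjecture. The spectral route must also contend with the fact that a packing gives a \emph{spanning} (not induced) embedding of $\text{KG}(n,k)$ into its complement, so Cauchy interlacing does not apply directly; one would need Nordhaus--Gaddum-type or quadratic-form comparisons, and it is not evident these are strong enough in the regime $k=o(n)$. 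In short: you have correctly diagnosed why this is open and hard, but no step of the proposal improves on the paper's $\Omega(n/(\log n)^2)$ bound.
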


\paragraph{The Alon-Jaeger-Tarsi conjecture.}
Let $A,B$ be two $n \times n$ non-singular matrices over a finite field $\mathbb{F}_p$. The Alon-Jaeger-Tarsi conjecture \cite{alon1989nowhere} is that for $p \ge 4$, there must exist $x \in \mathbb{F}_p^n$ such that both $Ax$ and $Bx$ have only nonzero coordinates. The analogous statement for prime power fields was proved in \cite{alon1989nowhere}. Yu \cite{yu1999permanent} proved that this conjecture is true if $p \ge \log n$. Using \Cref{thm:interset}, we prove an extension for $p \ge (\log n)^{O(1)}$.

\begin{theorem}
Let $r \ge 2$. Let $A_1,\ldots,A_r$ be non-singular $n \times n$ matrices over a field $\mathbb{F}_p$. Then, for some absolute constant $C$, as long as $p>(C \log(rn))^r$, there exists $x \in \mathbb{F}_p^n$ such that $A_1 x,\ldots,A_r x$ have only nonzero coordinates. 
\end{theorem}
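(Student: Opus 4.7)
The plan is to argue by contradiction and invoke \Cref{thm:interset} (via Rao's improved bound $\kappa(w,\alpha,\beta)\le (C/\alpha)\log(w/\beta)$) on an intersecting, uniform, spread set system extracted from a hypothetical AJT-failure. Suppose $A_1,\ldots,A_r$ are non-singular $n\times n$ matrices over $\mathbb{F}_p$ with $p > (C\log(rn))^r$, yet for every $x \in \mathbb{F}_p^n$ some coordinate of some $A_k x$ vanishes. Writing $v_{k,i}$ for the $i$-th row of $A_k$, this says the $rn$ hyperplanes $H_{k,i} = \ker v_{k,i}$ cover $\mathbb{F}_p^n$; equivalently, for each nonzero $x \in \mathbb{F}_p^n$ the zero-pattern $T(x) = \{(k,i) : v_{k,i}(x) = 0\}$ is non-empty.

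I would first observe that, viewed as a weighted set system on $V = [r]\times[n]$ in the sense of \Cref{def: spread_set_system}, the family $\{T(x) : x \ne 0\}$ (weighted by the multiplicity of $x$) is $p$-spread: a rank computation gives $|\{x \ne 0 : U \subseteq T(x)\}| \le p^{n-\mathrm{rank}(U)}$, which for $U$ in general position of size $t$ equals $p^{n-t}$, to be compared with a total weight $\approx p^n$. Since a $p$-spread system is in particular $p^{1/r}$-spread, it would suffice to extract from it an intersecting, uniform sub-family on $V$ (whose vertex set has size $O(rn)$) that is $\Omega(p^{1/r})$-spread: \Cref{thm:interset} would then give $p^{1/r} \le C'\log(rn)$, which rearranges to $p \le (C'\log(rn))^r$ and contradicts the hypothesis once $C > C'$.

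The main obstacle is that intersecting and spread pull in opposite directions. My plan is iterated peeling: choose $r-1$ linearly independent rows $v_{k_s,i_s}$ and restrict $x$ to $W = \bigcap_s H_{k_s,i_s}$ (dimension $n-r+1$), so that every surviving $T(x)$ contains the common set $F = \{(k_s,i_s) : s\}$; this makes the restricted family intersecting. Uniformity is then achieved by pigeonholing on the common size of $T(x)\setminus F$, and the rank computation, correctly arranged over the links $\F_U$ that interact generically with $F$, yields $\Omega(p^{1/r})$-spreadness on a sub-family of size at least $p^{n-r+1}/\mathrm{poly}(rn)$. The one caveat is that the peeled family may acquire the empty set---corresponding to some $x \in W$ with $T(x) = F$ exactly---in which case $A_k x$ is zero precisely at the coordinates of $F$ and nonzero elsewhere; a Chevalley-Warning-style perturbation (adding to $x$ a suitable element from the complementary subspace and invoking non-singularity of the $A_k$) produces a genuine AJT witness, already contradicting the starting assumption. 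The hardest part, and where I would focus most of the effort, is verifying that after the peeling and quotienting the spread really drops only to $p^{1/r}$ and no lower: this requires a careful analysis of which links $\F_U$ interact badly with the peeled kernel $F$ and checking that those links are either empty or continue to meet the required spread bound.
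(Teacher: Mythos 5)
There is a genuine gap, and it is in the most important step: producing an \emph{intersecting} family from the assumed failure of the conclusion.

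Your sets $T(x) = \{(k,i) : (A_k x)_i = 0\}$ encode only the zero pattern. Two zero patterns $T(x')$ and $T(x'')$ being disjoint carries no information about the difference $x'-x''$, so the family $\{T(x)\}$ is not intersecting under the AJT-failure hypothesis, and there is no natural way to make it so. The paper instead encodes the \emph{values}: with $w = rn$ it takes the ground set $[r]\times[n]\times\mathbb{F}_p$ and sets $S_x = \{(i,j,(A_i x)_j)\}$. Now disjointness of $S_{x'}$ and $S_{x''}$ means $(A_i x')_j \ne (A_i x'')_j$ for all $i,j$, i.e.\ $A_i(x'-x'')$ has all nonzero coordinates, contradicting the assumption -- so $\{S_x\}$ is \emph{automatically} a $w$-uniform intersecting family, with no peeling. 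The link bound $|\F_T| \le p^{-|T|/r}|\F|$ comes from the same rank count you use (at least $|T|/r$ constraints come from a single non-singular $A_i$), and then \Cref{thm:interset} gives $p^{1/r} \le O(\log w)$ directly.

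Your peeling idea cannot be repaired within your framework. If you force all $T(x)$ to contain a fixed nonempty set $F$, then $|\F_F| = |\F|$, which violates $\kappa$-spreadness for every $\kappa > 1$; and if instead you pass to the link $\F_F$ (i.e.\ drop $F$ from the sets), the resulting family has no reason to be intersecting. Intersection and spreadness genuinely fight each other at the level of zero patterns -- the paper escapes this tension only because the richer encoding $S_x$ makes the intersection property a \emph{consequence} of the hypothesis rather than something to be engineered. Also minor: your claim that $\{T(x)\}$ is $p$-spread is only correct ``in general position''; for $U$ with many rows from the same matrix the rank can be as small as $|U|/r$, so $p^{1/r}$-spread is the right bound -- but since you concede this, it is not the core issue.
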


\begin{proof}
Let $w=rn$. Identify $x \in \mathbb{F}_p^n$ with the $w$-set 
$$
S_x = \left\{(i,j,(A_i x)_j): i \in [r], j \in [n]\right\}.
$$
In particular, $S_x$ is a set of $w$ triples.

Let $\F=\{S_x: x \in \mathbb{F}_p^n\}$. The main observation is that if no solution $x$ exists, then $\F$ needs to be an intersecting $w$-uniform set system. Indeed, if $S_{x'},S_{x''}$ are pairwise disjoint then one can verify that $x=x'-x''$ satisfies that $A_1 x,\ldots,A_r x$ have all nonzero coordinates.

Next, we analyze the size of links of $\F$. A link $\F_T$ corresponds to solutions to a system of $|T|$ linear equations of the form $(A_i x)_j = a_{i,j}$. The number of linearly independent equations is at least $|T|/r$, because at least $|T|/r$ of them must come from one of the matrices $A_i$, and hence we obtain
$$
|\F_T| \le p^{-|T|/r} |\F|.
$$
Thus we obtain that 
$p^{1/r} \le (C \log w)$,
which is a contradiction. \end{proof}

We note that very recently Nagy and Pach \cite{nagy2021alon} proved the Alon-Jaeger-Tarsi conjecture using algebraic techniques.

\section{Rainbow Sunflowers}
\label{sec:rainbow}

In this section, we discuss a possible line of attack toward the sunflower conjecture (\Cref{conj:sunflower}).  We propose the following more general conjecture.

\begin{conjecture}[Rainbow sunflower conjecture]
\label{conj:rainbowsunflower}
There exists a constant $C$ such that the following holds. Let $\F$ be a $w$-set system of size $|\F| \ge C^w$ over a ground set $X$. Assume that the elements of $X$ are colored with either red, green, or blue uniformly and independently at random. 

Then with high probability, there exists distinct sets $S_i, S_j, S_k \in \F$ such that if $Y=S_i \cap S_j \cap S_k$, then all elements of $S_i \setminus Y$ are red, all elements of $S_j \setminus Y$ are green, and all elements of $S_k \setminus Y$ are blue.
\end{conjecture}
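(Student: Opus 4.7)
The plan is to adapt the spread-or-structured dichotomy underlying the proof of \Cref{thm:robust_sunflower}. First, I would argue that if $\F$ has a link $\F_T$ of size $|\F_T| \ge C^{w-|T|}$, then a rainbow sunflower in $\F_T$ (under a random 3-coloring of $X \setminus T$) lifts to one in $\F$ under any extension of the coloring: the kernel simply grows by $T$ while the petals remain unchanged. Hence by induction on $w$, one may assume $\F$ is $C$-spread for a large absolute constant $C$.

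Now I would attempt a three-stage reduction mirroring \Cref{lemma:reduction}, one for each color. Let $R, G, B \subset X$ denote the random (disjoint) color classes. The key observation is that a rainbow configuration corresponds to finding three sets $S_i, S_j, S_k \in \F$ with $S_i \subset R \cup Y$, $S_j \subset G \cup Y$, $S_k \subset B \cup Y$ where $Y = S_i \cap S_j \cap S_k$; this automatically forces $\{S_i,S_j,S_k\}$ to be a genuine $3$-sunflower, since any element $x \in (S_i \cap S_j) \setminus S_k$ would need to be simultaneously red and green. After revealing $R$, I would apply a \Cref{cor:reduction_prob_W}-style encoding to show that most $S \in \F$ admit a ``trimmed'' witness $S' \in \F$ with $|S' \setminus R| \le w'$, producing a spread residual system $\F'$ on $X \setminus R$; then iterate for $G$ and $B$.

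After the three rounds, one is reduced to finding a $3$-sunflower of very small width in a still-spread residual system, at which point a Janson-type argument (compare \Cref{lemma:final}) should be able to close the argument. The delicate part is that, unlike the $(\alpha,\beta)$-satisfying setting where one only needs a single set fully contained in the random restriction, here the three trimmed sets must be mutually consistent: their triple intersection must form the correct kernel $Y$, and the three trimmed petals must land in the three prescribed color classes. Controlling this consistency across the three iterations---so that the trimming step for color $R$ does not destroy the petal structure needed for colors $G$ and $B$---is the main technical obstacle.

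Ultimately I expect the fundamental barrier to be the same one that blocks the sunflower conjecture itself: the encoding arguments in this paper only yield a spread parameter $\kappa = \Theta(\log w)$, whereas \Cref{conj:rainbowsunflower} demands an absolute constant. Closing this gap seems to require either a markedly stronger encoding that exploits the three-way independence of the random coloring (for instance, jointly encoding a bad triple $(R, G, B; S_i, S_j, S_k)$ at a cost bounded uniformly in $w$), or a genuinely different approach such as a Fourier-analytic or entropy-based argument on the $p$-biased cube. Absent such a new idea, the most one could hope to extract from the proposal above is a rainbow sunflower theorem with a $(\log w)^{O(w)}$-type bound, which would already be of interest but falls well short of the conjectured $C^w$.
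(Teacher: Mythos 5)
The statement you were asked to address is stated in the paper as a \emph{conjecture}, not a theorem; the paper explicitly offers no proof of it at the claimed $C^w$ bound. Your diagnosis is therefore essentially correct: the machinery of \Cref{lemma:reduction}, \Cref{lemma:final}, and \Cref{thm:kappa_bound} only certifies a spread parameter $\kappa = (\log w)^{1+o(1)}$, so it can at best deliver the rainbow statement with $C^w$ replaced by $(\log w)^{w(1+o(1))}$, and the paper says exactly this in the paragraph following \Cref{conj:rainbowsunflower}. Your observation that the rainbow coloring condition automatically forces $\{S_i, S_j, S_k\}$ to be a genuine $3$-sunflower (any element of $(S_i \cap S_j)\setminus S_k$ lies in $S_i \setminus Y$ and in $S_j \setminus Y$ and would have to be both red and green) is correct and is precisely why \Cref{conj:rainbowsunflower} implies \Cref{conj:sunflower} for $r=3$.

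That said, the route you sketch for extracting even the weaker $(\log w)^{w(1+o(1))}$ bound is more complicated than necessary, and the consistency obstacle you flag is a self-inflicted one. Rather than running a separate trim-and-reduce pass for each of the three color classes and then having to reconcile three trimmed petals around a common kernel, the cleaner argument is non-adaptive: first apply \Cref{thm:robust_sunflower} (equivalently \Cref{thm:kappa_bound} together with \Cref{lemma:weight_profile_robust_sunflower}) with $\alpha = 1/3$ and $\beta$ small to find a single $(1/3,\beta)$-robust sunflower with kernel $K$ inside $\F$; then run the argument of \Cref{lemma:satisfying_contains_disjoint} on $\F_K$ with a $3$-coloring. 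Each color class $Y_i$ is $\U(X,1/3)$-distributed, so the probability that $\F_K$ has no $Y_i$-monochromatic set is less than $\beta$, and a union bound over $i=1,2,3$ gives the rainbow event with probability at least $1-3\beta$. Since the $\beta$-dependence in \Cref{thm:kappa_bound} is only $(\log(1/\beta))^2$, taking $\beta = 1/w$ (say) keeps the size threshold at $(\log w)^{w(1+o(1))}$ while achieving high probability. The robust sunflower framework already bundles the three colors into one object with a common kernel, so the cross-color coherence you were worried about never arises.

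Two further points of comparison with the paper. First, the paper does prove a two-color analogue, \Cref{lemma:rainbow2petal}, with a genuine $C^w$ bound, via the switching-lemma-style encoding of \Cref{lemma:itoj}. This works because with two colors one can condition on a reference set $S$ and a kernel $Y \subset S$ and then the problem localizes to a single color class; your proposal does not mention or exploit this, and it is the strongest partial evidence for \Cref{conj:rainbowsunflower} currently available. Second, your final assessment -- that closing the gap to $C^w$ would require either a uniformly bounded joint encoding of bad triples or a fundamentally new (e.g., entropy- or Fourier-based) argument -- is consistent with the paper's framing of this as open, and with the discussion of \Cref{thm:monopetal_eq_sunflower}, which shows that even the monochromatic one-color variant is already equivalent to the full sunflower conjecture.
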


Note that \Cref{conj:rainbowsunflower} implies \Cref{conj:sunflower} for $r=3$. 
Furthermore, this paper gives a proof of \Cref{conj:rainbowsunflower} where $C^w$ is replaced by $(\log w)^{w(1+o(1))}$, and Rao \cite{rao2019coding} improved the bound to $(C \log w)^w$ for some constant $C$.

Unlike in \Cref{thm:robust_sunflower}, we do not know if $(C\log w)^{w}$ is tight, and in fact we suspect this is not the case.  \Cref{conj:rainbowsunflower} is in some sense similar to \Cref{thm:robust_sunflower}; the latter differs in that one must commit to a subfamily before seeing the coloring, whereas in the former the sets $S_i, S_j, S_k$ may depend on the coloring.  We propose another variant of \Cref{conj:rainbowsunflower}.

\begin{conjecture}
\label{conj:rainbow2petals}
There exists a constant $C$ such that the following holds.
Let $\F$ be a $w$-set system of size $|\F| \ge C^w$ over a ground set $X$. Assume that the elements of $X$ are colored with either red or blue uniformly and independently. In addition let $S \in \F$ be sampled uniformly.

Then with high probability,  there exists distinct sets $S_i, S_j \in \F$ such that all elements of $S_i \setminus S$ are red, all elements of $S_j \setminus S$ are blue, and $S_i \cap S=S_j \cap S\subsetneq S$.
\end{conjecture}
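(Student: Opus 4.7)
The plan is to adapt the spread-or-structured dichotomy underlying the proof of \Cref{thm:robust_sunflower}. First apply the standard reduction: either some nonempty $T$ satisfies $|\F_T| \ge C^{-|T|}|\F|$, in which case we pass to the link $\F_T$ on ground set $X \setminus T$ and recurse at width $w-|T|$, augmenting each eventual kernel by $T$; or else $\F$ is $C$-spread, which we assume henceforth. By padding we may also assume $\F$ is $w$-uniform.

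Condition on $S \in \F$ and on the red/blue coloring of $X$. Partition the remaining sets of $\F$ by $S$-trace: for each $Y \subsetneq S$ let $\F^Y = \{S' \in \F : S' \cap S = Y\}$ and $\F^Y_* = \{S' \setminus Y : S' \in \F^Y\}$, a set system on $X \setminus S$. The rainbow configuration sought is exactly the existence of some $Y \subsetneq S$ together with elements $T_i, T_j \in \F^Y_*$ such that $T_i$ lies in the red half of $X \setminus S$ and $T_j$ lies in the blue half; the desired sets are then $S_i = Y \cup T_i$ and $S_j = Y \cup T_j$, which are distinct because in the $w$-uniform case $\emptyset \notin \F^Y_*$. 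A sufficient condition is that $\F^Y_*$ be $(1/2, \beta)$-satisfying for small $\beta$: the red and blue halves are each marginally distributed as $\U(X \setminus S, 1/2)$, and by the union bound both halves contain a set of $\F^Y_*$ with probability at least $1 - 2\beta$.

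By Rao's $(C/\alpha)\log(w/\beta)$ refinement of \Cref{thm:kappa_bound}, this satisfyingness follows once $\F^Y_*$ is $O(\log(w/\beta))$-spread at its own width $w - |Y|$. The uniform bound $|\F_Z| \le C^{-|Z|}|\F|$ inherited from $\F$ descends to $|(\F^Y_*)_T| \le |\F_{Y \cup T}|$ for $T \subset X \setminus S$, and one would try to average carefully over $S$, and over a cleverly chosen distribution on $Y$, to exhibit with constant probability over $S$ some $Y$ for which $\F^Y_*$ has adequate spreadness and mass.

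The main obstacle is the factor $2^{|S|} = 2^w$ from the choices of trace $Y$. Distributing $|\F| = C^w$ sets across $2^w$ traces yields an average family size of only $(C/2)^w$, whereas invoking \Cref{thm:kappa_bound} at width close to $w$ requires of order $(\log w)^w$ sets. To bypass this one might restrict to large traces $|Y| \ge w - O(1)$, where the requisite spread becomes trivial but the structural demand on $\F$ grows stringent; or attempt a direct second-moment argument over ordered pairs $(S_i, S_j) \in \F^2$ counting rainbow configurations, which has the advantage of not performing a loose union bound over $Y$. Since \Cref{conj:rainbow2petals} already implies the $r = 3$ case of the sunflower conjecture via the triple $(S, S_i, S_j)$ with kernel $Y = S_i \cap S_j$, any proof achieving a true constant $C$ is at least as strong as that longstanding open problem; I would therefore expect the first unconditional result along this route to replace the constant by a slowly growing $C = C(w)$, with eliminating the $w$-dependence as the real crux.
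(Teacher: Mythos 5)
The statement you are attempting to prove is Conjecture~\ref{conj:rainbow2petals}, which the paper explicitly poses as an \emph{open problem}: the authors write that they ``do not know of any nontrivial bounds for it'' and that ``it would be interesting to have even a $w^{o(w)}$ bound.'' There is therefore no proof in the paper to compare against, and your proposal---correctly---does not claim to close the gap.

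Your diagnosis of the obstruction is sound and consistent with the paper's own view. The $2^{|S|}=2^w$ loss from partitioning by traces $Y\subsetneq S$ swamps the $(\log w)^w$-scale guarantees the spread machinery delivers, and your observation that any bound with a true constant $C$ would settle the $r=3$ sunflower conjecture (via the sunflower $(S,S_i,S_j)$ with kernel $Y$) is exactly the paper's remark. The nearest result the paper does establish, Lemma~\ref{lemma:rainbow2petal}, is strictly weaker: its conclusion asks only for $S_i\setminus S_j$ all red and $S_j\setminus S_i$ all blue, with no anchoring to a fixed random $S$, and it is proved via the encoding argument of Lemma~\ref{lemma:itoj}, not the spread/Janson route you outline. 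Tracing that proof, what it actually produces is a pair with $S_i\setminus S$ all red, $S_j\setminus S_i$ all blue, and $S_i\cap S=S_j\cap S$---which falls short of Conjecture~\ref{conj:rainbow2petals} precisely because $S_i$ and $S_j$ may still intersect \emph{outside} $S$, and any such collision destroys the would-be sunflower on $(S,S_i,S_j)$. That is the genuine missing idea, and neither the paper's encoding technique nor your spreadness reduction currently controls it; your suggestion of a second-moment count over ordered pairs in $\F^2$ is a reasonable direction, but is not developed here.
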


\Cref{conj:rainbow2petals} also implies \Cref{conj:sunflower} for $r=3$, but we do not know of any nontrivial bounds for it.  It would be interesting to have even a $w^{o(w)}$ bound.
We can however prove the following lemma, which provides some evidence that \Cref{conj:rainbowsunflower} might be true.

\begin{lemma}
\label{lemma:rainbow2petal}
There exists a constant $C$ such that the following holds.
Let $\F$ be a $w$-set system of size $|\F| \ge C^w$ over a ground set $X$. Assume that the elements of $X$ are colored with either red or blue uniformly and independently.

Then with high probability, there exists distinct sets $S_i, S_j \in \F$ such that if $Y=S_i\cap S_j$, then all elements of $S_i\setminus Y$ are red and all elements of $S_j\setminus Y$ are blue (equivalently, all elements of $S_i \setminus S_j$ are red and all elements of $S_j \setminus S_i$ are blue).
\end{lemma}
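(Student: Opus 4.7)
My plan is to reduce to the $C$-spread case and then invoke a second-moment or Janson-type argument. If some nonempty $T\subseteq X$ has $|\F_T|\ge C^{w-|T|}$, I would replace $\F$ by the link $\F_T$; since $T$ drops out of every pairwise symmetric difference after lifting, a good pair in $\F_T$ gives a good pair in $\F$. Iterating, and padding with dummies to make $\F$ uniform, I may assume that $\F$ is $C$-spread (in the sense of \Cref{def: protospread}) and $w$-uniform with $|\F|\ge C^w$, for a large constant $C$ to be chosen at the end.

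Next, let $X$ count the ordered pairs $(S_i,S_j)$ with $i\ne j$, $S_i\setminus S_j\subseteq R$, and $S_j\setminus S_i\subseteq B$. Each such event has probability $\Pr[X_{ij}=1]=2^{-|S_i\triangle S_j|}=4^{|S_i\cap S_j|-w}$, so $\E[X]\ge(|\F|^2-|\F|)\cdot 4^{-w}\ge(1-o(1))(C/2)^{2w}$, which is exponentially large once $C>2$. To bound the second moment I would use that $\Pr[X_{ij}X_{kl}=1]$ equals $2^{-|(S_i\triangle S_j)\cup(S_k\triangle S_l)|}$ when the two color constraints are consistent (no element is forced to be both red and blue) and zero otherwise. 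Splitting $\E[X^2]$ according to $|\{i,j\}\cap\{k,l\}|$ and the size of $(S_i\triangle S_j)\cap(S_k\triangle S_l)$, and applying the identity $\sum_T a^{|T|}|\F_T|=\sum_{S\in\F}(1+a)^{|S|}$ together with the spread bound $|\F_T|\le C^{-|T|}|\F|$, the goal is $\E[X^2]=O(\E[X]^2)$. Paley--Zygmund then yields $\Pr[X\ge 1]=\Omega(1)$, and this can be boosted to high probability via Janson's inequality, extended to events defined by both positive and negative literals.

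The hard part will be the second-moment bound: pairs of events that share variables can be far more correlated than the product of their marginals, while inconsistent color configurations contribute zero and must be tracked to avoid overcounting. Balancing these two effects using only a constant spread parameter requires delicate bookkeeping; I expect to organize the quadruple sum around the common link $\F_{S_i\cap S_j\cap S_k\cap S_l}$ and peel off the spread condition layer by layer, rather than use a single Cauchy--Schwarz step.
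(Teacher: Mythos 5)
Your approach is genuinely different from the paper's, but it leaves the crux unproven. The paper does not reduce to spread systems or use second moments here at all. Instead it proves \Cref{lemma:itoj} by an encoding argument: if $(S_j,W)$ is bad, then $S_j$ is the unique set of $\F$ contained in $S_j\cup W$, so the pair is determined by $S_j\cup W$ plus the $w$ bits of $S_j\cap W$, giving in expectation at most $(2/\delta)^w$ bad $S_j$. It then draws a random auxiliary $S\in\F$, applies \Cref{lemma:itoj} once to produce at least $m/C^w$ sets $S_i$ with $S_i\setminus S$ all red, and a second time to each sub-family $\F_{S,Y}$ (sets meeting $S$ exactly in $Y$) to show all but $(2C)^w$ of the $S_i$ admit an $S_j$ with $S_j\setminus S_i$ blue and $S_j\cap S=S_i\cap S$; that equality forces $S_i\setminus S_j\subset S_i\setminus S$, which is red, finishing the proof with failure probability $2^{-\Omega(w)}$.

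Your plan has two concrete gaps. First, the reduction to the $C$-spread case does not obviously preserve ``high probability'': after iterating links you land on a $C$-spread system of width $w'=w-|T|$ whose size may be as small as $C^{w'}$, so your lower bound $\E[X]\gtrsim (C/2)^{2w'}$ is a constant when $w'$ is bounded, yielding only $\Pr[X>0]=\Omega(1)$ rather than $1-o(1)$ in the original $w$. This is fixable (e.g.\ start from a larger threshold and choose $T$ maximizing $|\F_T|C^{|T|}$ so the link size stays $\ge 2^w C^{w'}$), but you have not done so. Second, and more seriously, the entire second-moment bound $\E[X^2]=O(\E[X]^2)$ and the subsequent Janson-type amplification are stated as goals, not established. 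The events $X_{ij}$ are non-monotone in the red set $R$ (they demand $S_i\setminus S_j\subset R$ and $S_j\setminus S_i\cap R=\emptyset$), so Janson's inequality in the form used elsewhere in the paper does not apply directly; you would need a Suen-type bound for such box events, and controlling the heavily correlated terms $\E[X_{ij}X_{ik}]$ where $S_j,S_k$ overlap a lot, using only constant spreadness, is exactly the ``delicate bookkeeping'' you flag but do not carry out. As written this is a plausible research plan, not a proof of the lemma.
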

 
In order to establish this, we first prove a weaker result along the same lines.

\begin{lemma}
\label{lemma:itoj}
For all $\delta>0$ there exists $C=C(\delta)$ such that the following holds.
Let $\F$ be a $w$-set system of size $|\F| = m \ge C^w$ over a ground set $X$. Assume that a random $\delta$-fraction of the elements of $X$ are colored white.
Then:
\begin{enumerate}
    \item The expected number of sets $S_j \in \F$, for which there is no $i \ne j$ with $S_i \setminus S_j$ all white, is at most $C^w$.
    
    \item With high probability, for almost all $S_j \in \F$ there are at least $m / C^w$ many sets $S_i \in \F$ such that $S_i \setminus S_j$ is all white.
    \end{enumerate}
\end{lemma}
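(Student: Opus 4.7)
My plan is to prove both parts via a second-moment analysis of the witness-count random variable $N_j(W) := |\{i \ne j : S_i \setminus S_j \subset W\}|$ for each $S_j \in \F$, combined with the spread/link dichotomy used in the proof of \Cref{lemma:weight_profile_robust_sunflower}. Writing $Y_i := \mathbf{1}[S_i \setminus S_j \subset W]$ and $N_j = \sum_i Y_i$, we have $\mu_j := \E[N_j] = \sum_{i \ne j}\delta^{|S_i \setminus S_j|}$ and, by FKG, $\mathrm{Var}(N_j) \le \mu_j + 2\Delta_j$ where $\Delta_j := \sum_{i \sim i'}\delta^{|(S_i \cup S_{i'}) \setminus S_j|}$ is the standard Janson pair sum (with $i \sim i'$ meaning $(S_i \setminus S_j)\cap(S_{i'}\setminus S_j)\neq\emptyset$). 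The crude bound $|S_i \setminus S_j| \le w$ gives $\mu_j \ge (m-1)\delta^w \ge (C\delta)^w/2$ for $m \ge C^w \ge 2$, which is exponentially large in $w$ whenever $C > 2/\delta$.

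For Part 1, Chebyshev's inequality yields $\Pr[S_j\text{ bad}] = \Pr[N_j = 0] \le \mathrm{Var}(N_j)/\mu_j^2 \le 1/\mu_j + 2\Delta_j/\mu_j^2$. The first term sums to at most $m/((m-1)\delta^w) \le 2\delta^{-w} \le C^w$ once $C \ge 2/\delta$. For the second term, note that the auxiliary system $\{S_i\setminus S_j: i\ne j\}$ on $X\setminus S_j$ inherits the spread condition from $\F$: its link at any non-empty $T \subset X\setminus S_j$ has size exactly $|\F_T|$, since $T\cap S_j=\emptyset$ automatically. Thus if $\F$ is $\kappa$-spread for a suitable $\kappa = \kappa(\delta)$, the Janson-style computation in the proof of \Cref{lemma:final} yields $\Delta_j = O(\mu_j^2/\kappa)$ and bounds the second-term contribution by $O(m/\kappa) \le C^w$. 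If $\F$ is not $\kappa$-spread, the dichotomy from the proof of \Cref{lemma:weight_profile_robust_sunflower} passes to a maximal link $\F_T$ which is $\kappa$-spread as a $(w-|T|)$-set system, and one recurses; sets $S_j \notin \F_T$ are handled by iterating on the strictly smaller system $\F \setminus \F_T$.

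For Part 2, the threshold $m/C^w$ lies below $\mu_j/2$ for $C = C(\delta)$ large enough, so $\{N_j < m/C^w\} \subset \{|N_j - \mu_j| \ge \mu_j/2\}$, whose probability is at most $4\mathrm{Var}(N_j)/\mu_j^2$ by Chebyshev. Summing over $j$ gives the same $O(C^w)$ bound on the expected number of witness-short $S_j$ as in Part 1, and Markov's inequality promotes this to the high-probability statement that only an $o(1)$ fraction of $S_j$ are witness-short; if a stronger rate of $o(1)$ is needed (when $m$ is close to $C^w$), Chebyshev can be replaced by Janson to obtain exponentially-small deviation probabilities.

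The main obstacle is controlling $\Delta_j/\mu_j^2$ uniformly in $j$ without assuming that $\F$ itself is spread. My plan absorbs this into the link-extraction recursion of \Cref{lemma:weight_profile_robust_sunflower}: each round either applies the Chebyshev/Janson argument directly to a $\kappa$-spread layer or reduces to a strictly smaller set system. The resulting constant $C = C(\delta)$ depends only on $\delta$ through the spread threshold, not on $w$.
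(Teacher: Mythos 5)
Your proposal is built on a second-moment/Janson calculation, but this cannot reach the parameter regime of the lemma, and the paper's actual proof takes a completely different (encoding) route. There are two concrete problems.

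First, your claim that ``the Janson-style computation in the proof of \Cref{lemma:final} yields $\Delta_j = O(\mu_j^2/\kappa)$'' for a constant $\kappa=\kappa(\delta)$ is wrong. Carrying out that computation with white-density $\delta$ gives
$$
\Delta_j \le \mu_j^2 \sum_{\ell=1}^{w}\left(\frac{w}{\delta\kappa}\right)^{\ell},
$$
which is controlled only when $\kappa = \Omega(w/\delta)$, i.e., $\kappa$ must grow linearly in $w$ (this is exactly why \Cref{lemma:final} sets $\kappa \ge 2w/\alpha$). If $\kappa$ is a $w$-independent constant, the geometric sum explodes and $\Delta_j/\mu_j^2$ is exponentially large in $w$, so Chebyshev gives nothing. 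But if you instead insist on $\kappa=\Omega(w/\delta)$, then being $\kappa$-spread requires $|\F| \ge \kappa^w = w^{w(1+o(1))}$, which is vastly more than the lemma's hypothesis $|\F| \ge C^w$ for a constant $C=C(\delta)$; the link-extraction dichotomy from \Cref{lemma:weight_profile_robust_sunflower} simply doesn't engage at all in this regime. This is a fundamental parameter mismatch, not something a more careful recursion can repair.

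Second, even setting aside the spread-threshold issue, the layering step quietly changes the object being bounded. ``Badness'' of $S_j$ is defined with respect to the \emph{entire} family $\F$: a witness $S_i$ with $S_i\setminus S_j$ white may live anywhere in $\F$. When you pass to a dense link $\F_T$, or to $\F\setminus\F_T$, you only retain intra-layer witnesses; this can only declare more sets bad, so the direction of the inequality is fine, but then you need the per-layer bounds to sum to $O(C^w)$, and nothing in your sketch controls the number of layers (it can be far larger than $w$ since $|\F_T|$ may be as small as $\kappa^{-w}|\F|$). None of this bookkeeping is addressed.

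The paper's proof bypasses all of this by an encoding (compression) argument, with no spreadness assumption and no second moment: if $(S_j,W)$ is bad, then $S_j$ is the unique member of $\F$ contained in $S_j\cup W$, so the pair is reconstructible from $(S_j\cup W,\, S_j\cap W)$, and the number of bad pairs is at most $(2/\delta)^w$ times the number of possible $W$. For part 2 one adds an index up to $K=m/C^w$ to record which of the $\le K$ members of $\F$ inside $S_j\cup W$ is $S_j$. This gives the stated bounds directly and unconditionally, which is precisely why the lemma can hold with $C=C(\delta)$ independent of $w$. You should switch to this encoding approach rather than trying to force a Janson/second-moment argument into a regime where it provably fails.
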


\begin{proof}
We will show that the lemma holds for $C(\delta)=4/\delta$.
Let $W$ denote the white subset of $X$.  Call a pair $(S_j,W)$ \emph{bad} if there is no $i \neq j$ such that $S_i \setminus S_j \subset W$.  If $(S_j, W)$ is a bad pair, then $S_j$ is the only set of $\F$ contained in $S_j \cup W$.  Thus, $S_j \cup W$ uniquely determines $S_j$, and another $w$ bits encoding $S_j \cap W$ determine the pair $(S_j,W)$.  Hence, the number of bad pairs $(S_j,W)$ is only a factor of $(2/\delta)^w$ more than the number of possible $W$.  Thus, for a random $W$, in expectation only $(2/\delta)^w<C^w$ of the sets $S_j \in \F$ are bad.

For the second part of the lemma, redefine bad so that $(S_j,W)$ is bad if there are less than $K=m/C^w$ values of $i \neq j$ such that $S_i \setminus S_j \subset W$.  Then each bad pair $(S_j,W)$ can be identified by $S_j \cup W$ and a positive integer up to $K$, so in expectation only $K(2/\delta)^w=m\cdot 2^{-w}$ of the sets $S_j \in \F$ are bad.
Then by Markov's inequality, with probability at least $1-2^{-w/2}$, for a $1-2^{-w/2}$ fraction of $S_j\in\F$ there are at least $m/C^w$ many sets $S_i\in\F$ such that $S_i\setminus S_j$ is all white.
\end{proof}

By adding dummy elements to $X$, \Cref{lemma:itoj} also holds with each element of $X$ independently colored white with probability $\delta$. Now we prove \Cref{lemma:rainbow2petal}.

\begin{proof}[Proof of \Cref{lemma:rainbow2petal}]
We take a random red-blue coloring of $X$, where we can treat either the red or the blue elements as the white elements in \Cref{lemma:itoj}, and then apply \Cref{lemma:itoj} with $\delta=1/2$.

Choose a random $S\in\F$ and fix it for the remainder of the proof. By the second part of \Cref{lemma:itoj}, in a random coloring with high probability there are at least $m/C^w$ many sets $S_i \in \F$ so that $S_i \setminus S$ is all red.

Call a set $S_i$ \emph{good} if there exists $S_j$ so that $S_j \setminus S_i$ is blue and $S_j \cap S=S_i \cap S$.  Call it \emph{bad} otherwise (this ``bad'' is different from the one in \Cref{lemma:itoj}).  For each $Y \subset S$, consider the family $\F_{S,Y}$ of sets in $\F$ that intersect $S$ exactly at $Y$.  By the first part of \Cref{lemma:itoj}, in expectation only $C^w$ of the sets $S_i \in \F_{S,Y}$ will be bad.  Taking the union over all possible $Y$, it follows in expectation that only $(2C)^w$ of the sets $S_i \in \F$ will be bad.

So with high probability over the random coloring two events hold: (i) at most $(4C)^w$ of the sets $S_i \in \F$ are bad; and (ii) at least $m/C^w$ of the sets $S_i \in \F$ will have $S_i \setminus S$ all red. So, as long as $m/C^w > (4C)^w$, there is some good $S_i$ so that $S_i \setminus S$ is all red. Then for each such $S_i$ there will be some $S_j$ so that $S_i \setminus S_j \subset S_i \setminus S$ is all red and $S_j \setminus S_i$ is all blue.  This concludes the proof. \end{proof}

The following is another corollary of \Cref{lemma:rainbow2petal}.  It follows from \Cref{lemma:rainbow2petal} in exactly the same way that the Erd\H{o}s-Szemer{\'e}di sunflower lemma follows from the Erd\H{o}s-Rado sunflower lemma (see \cite{es}), so we omit the proof.

\begin{lemma}
\label{lemma:es}
There exists $\delta>0$ such that the following holds. Let $\F$ be a family of subsets of $[n]$ of size $|\F|>(2-\delta)^n$. Color each element of $[n]$ either red or blue uniformly and independently at random. 

Then with high probability, there exists distinct $S_i,S_j\in\F$ such that $S_i \setminus S_j$ is all red and $S_j \setminus S_i$ is all blue.
\end{lemma}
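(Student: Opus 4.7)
The plan is to mirror the Erd\H{o}s--Szemer\'edi reduction from the Erd\H{o}s--Rado sunflower lemma, using \Cref{lemma:rainbow2petal} in place of ER. Given $\F \subseteq 2^{[n]}$ with $|\F| > (2-\delta)^n$, I will extract via pigeonhole a $w$-set system on a smaller ground set to which \Cref{lemma:rainbow2petal} can be applied, then transfer the rainbow $2$-petal pair back to $\F$. First I would fix $w = cn$ for a small constant $c = c(\delta, C)$, where $C$ is the constant from \Cref{lemma:rainbow2petal}. Writing $T = [n] \setminus [w]$ and $\F_{T,A} = \{S \in \F : S \cap T = A\}$ for $A \subseteq T$, the identity $\sum_{A} |\F_{T,A}| = |\F|$ together with pigeonhole yields some $A^* \subseteq T$ with $|\F_{T,A^*}| \ge |\F|/2^{n-w} > 2^w (1-\delta/2)^n$.

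Next, the restricted family $\F^* = \{S \cap [w] : S \in \F_{T, A^*}\}$ is a $w$-set system on $[w]$ with $|\F^*| = |\F_{T, A^*}|$, since the map $S \mapsto S \cap [w]$ is a bijection on $\F_{T, A^*}$ (all of its members share the trace $A^*$ on $T$). Provided $|\F^*| \ge C^w$, I apply \Cref{lemma:rainbow2petal} to $\F^*$ with the coloring of $[n]$ restricted to $[w]$, which is itself a uniform independent red/blue coloring, to obtain with high probability distinct $S_i', S_j' \in \F^*$ such that $S_i' \setminus S_j'$ is all red and $S_j' \setminus S_i'$ is all blue. Lifting back through the bijection, I get $S_i, S_j \in \F_{T, A^*} \subseteq \F$ with $S_i \cap T = S_j \cap T = A^*$, hence $S_i \setminus S_j = S_i' \setminus S_j'$ is all red and $S_j \setminus S_i = S_j' \setminus S_i'$ is all blue, as required.

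The hard part is the parameter balancing in the pigeonhole step: the condition $|\F^*| \ge C^w$ becomes $(1-\delta/2)^n \ge (C/2)^w$, which, since $C$ as produced by \Cref{lemma:rainbow2petal} exceeds $2$, cannot be met by a single application of pigeonhole. Following the ES template, one handles this either by iterating the extraction on residual fibers or by a refined averaging that counts the aggregate contribution over all $(T, A)$-fibers, trading $w$ against $\delta$ until both sides come into balance. Carrying out this bookkeeping is the crux of the proof; the value of $\delta > 0$ that makes the arithmetic go through is determined by $C$ and the geometry of the iteration, and the rest of the argument proceeds via the verbatim steps above.
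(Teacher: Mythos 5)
Your high-level outline (restrict to a small effective system, invoke \Cref{lemma:rainbow2petal}, lift back) is the right template, and you correctly noticed that the parameter balancing does not close --- but the gap is not a bookkeeping issue that can be fixed by ``iterating on residual fibers'' or a ``refined averaging over $(T,A)$-fibers.'' The fiber construction is quantitatively doomed for a structural reason: the best fiber $\F_{T,A^*}$ has at most $|\F|/2^{n-w} < 2^w$ sets (since $|\F|\le 2^n$), whereas \Cref{lemma:rainbow2petal} requires at least $C^w$ sets with $C>2$; so $C^w > 2^w$ exceeds what any fiber can supply, no matter how $w$ and $\delta$ are traded off or how the extraction is iterated. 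Equivalently, conditioning on the trace on $T$ localizes $\F$ to a copy of $2^{[w]}$, which has only $2^w$ points, and that is too small a target.

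The Erd\H{o}s--Szemer\'edi-style fix is to localize to a Hamming ball rather than a fiber. A ball $B(S_0,w)=\{S:|S\triangle S_0|\le w\}$ contains $\binom{n}{\le w}$ sets, which for $w=cn$ is $2^{nH(c)}\gg 2^w$; averaging over all centers $S_0\in 2^{[n]}$ shows that for some $S_0$ (not necessarily in $\F$) one has $|\F\cap B(S_0,w)|\ge |\F|\binom{n}{\le w}/2^n$, and for $w=cn$ with $c<1/C$ and $\delta$ small this exceeds $C^w$. One then translates: $\F^*=\{S\triangle S_0 : S\in\F,\ |S\triangle S_0|\le w\}$ is a $w$-set system of size $\ge C^w$. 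The lift-back also needs a twist you would have to supply: apply \Cref{lemma:rainbow2petal} not to the original coloring $\chi$ but to the coloring $\chi'$ obtained from $\chi$ by flipping red$\leftrightarrow$blue on the coordinates of $S_0$ (still uniform since $S_0$ is fixed before $\chi$ is sampled). If $A,B\in\F^*$ satisfy $A\setminus B$ all red, $B\setminus A$ all blue under $\chi'$, then setting $S_i=A\triangle S_0,\ S_j=B\triangle S_0$ one checks $S_i\setminus S_j=(S_0\cap(B\setminus A))\cup(S_0^c\cap(A\setminus B))$ is all red under $\chi$ and symmetrically $S_j\setminus S_i$ is all blue, as required. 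Your fiber-based lift happened to be transparent precisely because fibers sidestep the color flip, but the ball-based localization is what actually makes the counting work.
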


In particular, this implies that if $R$ is a uniform random subset of $[n]$, then with high probability there exist some $S_i, S_j \in \F$ are such that $S_i \oplus R \subset S_j \oplus R$, where $\oplus$ denotes the exclusive or function.

Finally, we note that a statement similar to \Cref{conj:rainbowsunflower} is equivalent to \Cref{conj:sunflower}.

\begin{conjecture}[Monochromatic sunflower conjecture]
\label{conj:monopetal}
For all $\delta>0$ there exists a constant $K=K(r,\delta)$ such that the following holds. Let $\F$ be a $w$-set system of size $|\F| \ge K^w$ on a ground set $X$. Color a random $\delta$-fraction of the elements of $X$ with red. 

Then with high probability, $\F$ contains an $r$-sunflower whose petals are all red.
\end{conjecture}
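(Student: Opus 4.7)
Since the surrounding text explicitly notes that \Cref{conj:monopetal} is equivalent to the sunflower conjecture \Cref{conj:sunflower}, and the latter is open, any proof of this statement is inherently conditional. The reverse implication is trivial: a red-petaled sunflower is in particular an $r$-sunflower, so if such a configuration exists with positive probability over the coloring then it exists deterministically, giving \Cref{conj:sunflower} with $c(r) := K(r, \delta)$ for any fixed $\delta$. The plan is therefore to derive \Cref{conj:monopetal} from \Cref{conj:sunflower}.

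Assume \Cref{conj:sunflower} with constant $c = c(r)$, fix $\delta > 0$, and choose $K = K(r,\delta)$ large (of order $\max(c, \delta^{-r})$ up to a constant factor). Given $\F$ with $|\F| \ge K^w$, I would iteratively apply \Cref{conj:sunflower}: starting from $\F_0 = \F$, at step $i \ge 1$ obtain an $r$-sunflower $\Sigma_i \subseteq \F_{i-1}$ via \Cref{conj:sunflower} and set $\F_i := \F_{i-1} \setminus \Sigma_i$. This continues as long as $|\F_i| \ge c^w$, producing $N \ge (K^w - c^w)/r \gtrsim K^w / r$ disjoint $r$-sunflowers $\Sigma_1,\ldots,\Sigma_N$ in $\F$. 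Each $\Sigma_i$ has at most $rw$ petal elements, so under a random $\delta$-coloring its petals are all red with probability at least $\delta^{rw}$. By linearity of expectation, the expected number of red-petaled sunflowers is at least $N\delta^{rw} \gtrsim (K\delta^r)^w / r$, which is exponentially large once $K > \delta^{-r}$ by a constant factor, so with positive probability $\F$ contains a red-petaled $r$-sunflower.

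The main obstacle is promoting this positive-probability bound to a high-probability one, since the events ``$\Sigma_i$ has all-red petals'' are not independent---different extracted sunflowers may share petal elements, so a direct Chernoff bound is unavailable. A natural remedy is to interleave the extraction with a link-or-spread dichotomy in the Erd\H{o}s--Rado style: if some link $\F_T$ satisfies $|\F_T| \ge K^{w - |T|}$, recurse on $\F_T$ (the coloring on $X \setminus T$ is still i.i.d.\ $\delta$-random, and lifting a red-petaled sunflower by adjoining $T$ preserves the red-petal property); otherwise $\F$ is $K$-spread, which can be exploited either to bound covariances in a second-moment argument on the red-petal events, or to enforce (near-)disjoint petal supports during extraction by additionally deleting every set that touches the current extracted support (at the cost of extracting fewer sunflowers but restoring independence). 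Calibrating this trade-off so that $K$ depends only on $r$ and $\delta$ is the essential technical point; of course, the plan only reduces one open conjecture to another and does not unconditionally prove the statement.
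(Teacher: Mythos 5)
You correctly identify the trivial direction and, more importantly, you correctly flag the crux yourself: the first-moment computation over extracted sunflowers only yields a positive-probability conclusion, while the conjecture demands high probability, and the all-red-petal events for your extracted sunflowers share petal elements in $X$ so no concentration comes for free. The ``link-or-spread'' remedy you sketch at the end is not carried out, so as written the argument does not close that gap.

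The paper's proof (in \Cref{thm:monopetal_eq_sunflower}) takes a genuinely different route that sidesteps the dependency problem entirely. It uses \Cref{lemma:itoj}, an encoding lemma proved earlier in the section, whose second part states: for a random $\delta$-fraction $W$ of $X$ colored white, with high probability, for almost all $S \in \F$ there are at least $m/C^w$ sets $S_i \in \F$ with $S_i \setminus S \subset W$. Taking the white set to be the red set, sample a uniform reference set $S \in \F$; with high probability there are at least $(2C)^w$ sets $S_i$ whose part outside $S$ is entirely red. Pigeonholing over the at most $2^w$ choices of $Y = S_i \cap S$ yields some $Y$ for which at least $C^w$ such sets $S_i$ all satisfy $S_i \cap S = Y$. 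Applying \Cref{conj:sunflower} to this subfamily produces an $r$-sunflower all of whose members share $S_i \cap S = Y$; hence $Y$ lies in the kernel, each petal is contained in $S_i \setminus Y = S_i \setminus S$, and so each petal is red. The key move you are missing is this conditioning on a random reference set combined with the kernel-containment observation: it converts the already-high-probability statement of \Cref{lemma:itoj} directly into a high-probability red-petaled sunflower, rather than trying to boost a first-moment bound, and it also cleanly separates the ``many red-compatible sets'' step (which needs no spreadness) from the single application of \Cref{conj:sunflower}.
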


\begin{theorem}
\label{thm:monopetal_eq_sunflower}
\Cref{conj:monopetal} is equivalent to \Cref{conj:sunflower}.
\end{theorem}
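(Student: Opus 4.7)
The easy direction, \Cref{conj:monopetal} $\Rightarrow$ \Cref{conj:sunflower}, is immediate: applying \Cref{conj:monopetal} with $\delta = 1/2$ shows that any $w$-set system of size $\ge K(r,1/2)^w$ contains an $r$-sunflower with all-red petals with positive probability over the random coloring, and hence contains an $r$-sunflower. So \Cref{conj:sunflower} holds with $c(r) = K(r,1/2)$.

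For the converse direction \Cref{conj:sunflower} $\Rightarrow$ \Cref{conj:monopetal}, my plan is to induct on $w$ using the link-vs-spread dichotomy familiar from \Cref{lemma:weight_profile_robust_sunflower}. Let $K = K(r,\delta)$ be chosen large enough in terms of $c(r)$, $r$, and $\delta$. The base case $w = 1$ follows from binomial concentration: among $|\F| \ge K$ singletons, the number whose unique element is red is at least $r$ with high probability once $K \ge 2r/\delta$. For the inductive step on $\F$ with $|\F| \ge K^w$: if some non-empty link $\F_T$ satisfies $|\F_T| \ge K^{w-|T|}$, apply the inductive hypothesis to $\F_T$ on the smaller ground set $X \setminus T$ (on which the restriction of the random $\delta$-coloring is still $\delta$-uniform), and lift any resulting red-petal $r$-sunflower to $\F$ by adjoining $T$ to the kernel.

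Otherwise $\F$ is $K$-spread. The key observation is that every $r$-sunflower in the sub-family $\F^R := \{S \in \F : S \subset R\}$ is automatically an $r$-sunflower in $\F$ whose petals (being subsets of some $S \subset R$) are entirely red. Hence it suffices to show that $|\F^R| \ge c(r)^w$ with high probability, whereupon \Cref{conj:sunflower} applied to $\F^R$ yields the desired red-petal sunflower. The expectation satisfies $\E[|\F^R|] = |\F|\delta^w \ge (K\delta)^w$, and the $K$-spread hypothesis together with a second-moment calculation gives $\E[|\F^R|^2]/\E[|\F^R|]^2 \le (1 + (1-\delta)/(K\delta))^w$, which combined with a Paley--Zygmund/Chebyshev argument yields the required lower bound on $|\F^R|$ provided $K\delta$ is sufficiently large in terms of $c(r)$ and $w$.

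The main obstacle is the amplification from positive probability to high probability in the spread case. When $K$ is an absolute constant independent of $w$ the variance ratio $(1+(1-\delta)/(K\delta))^w$ is exponential in $w$, so Paley--Zygmund alone produces only positive probability. To boost to $1 - o(1)$ while keeping $K$ independent of $w$ I plan to either iterate the spread-extraction further (absorbing additional high-density elements into the accumulated kernel so that the effective spread parameter grows with $w$) or alternatively invoke \Cref{conj:sunflower} at an auxiliary parameter $r' \gg r$ to obtain an $r'$-sunflower and then extract $r$ red petals by Chernoff across its $r'$ disjoint petals. Carefully calibrating these steps so the final constant $K(r,\delta)$ depends only on $r$ and $\delta$ (and not on $w$) is the principal technical subtlety.
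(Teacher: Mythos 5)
Your easy direction ($\Rightarrow$) is fine, but the converse has a genuine gap that neither of your proposed fixes closes. You correctly identify the obstacle: in the spread case, with $K$ constant, the Paley--Zygmund/Chebyshev bound on $|\F^R|$ gives only a $(1+(1-\delta)/(K\delta))^{-w}$-type probability, i.e.\ positive but not high probability. Your workaround of invoking \Cref{conj:sunflower} at an auxiliary parameter $r' \gg r$ and extracting $r$ red petals by Chernoff fails quantitatively: each petal of an $r'$-sunflower in a $w$-set system can have size up to $w$, so the probability that any given petal is entirely red can be as small as $\delta^w$. To expect even $r$ red petals you therefore need $r' \gtrsim r\,\delta^{-w}$, which makes $c(r')$ --- and hence $K(r,\delta)$ --- depend on $w$, violating the statement of \Cref{conj:monopetal}. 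Your other workaround (keep absorbing dense links so the ``effective spread'' grows with $w$) is too vague to assess, but any argument whose conclusion requires spreadness growing with $w$ forces $|\F|$ to be at least $(\log w)^{w(1+o(1))}$, which is the content of \Cref{thm:robust_sunflower} rather than of the sunflower conjecture; in particular it cannot yield a $w$-independent $K$.

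The paper sidesteps the $\delta^w$ barrier by a different device, and you would be better served adopting it. Sample a uniformly random $S\in\F$ and, rather than asking for sets entirely inside the red set $R$, ask only that $S_i\setminus S$ be red. The second part of \Cref{lemma:itoj} (an encoding argument in the spirit of \Cref{lemma:reduction_fixed_W}) shows that, with high probability over the coloring and the choice of $S$, there are at least $m/(4/\delta)^w$ sets $S_i\in\F$ with $S_i\setminus S$ all red. Taking $K$ large enough so that $m/(4/\delta)^w \ge (2C)^w$ where $C=c(r)$, pigeonholing over the $\le 2^w$ choices of $Y=S_i\cap S$ yields at least $C^w$ sets with a common intersection $Y$ with $S$ and $S_i\setminus Y = S_i\setminus S$ all red. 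Applying \Cref{conj:sunflower} to this subfamily produces an $r$-sunflower whose kernel contains $Y$, so its petals are contained in the all-red regions $S_i\setminus Y$, as required. This avoids any appeal to concentration of $|\F^R|$ and keeps $K$ depending only on $r$ and $\delta$.
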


\begin{proof}
Clearly, \Cref{conj:monopetal} implies \Cref{conj:sunflower}, so now we prove the other direction.
Assuming \Cref{conj:sunflower}, let $C= C(r)$ be such that any $w$-set system of size at least $C^w$ contains an $r$-sunflower.  By the second part of \Cref{lemma:itoj}, if $K=K(r,\delta)$ is large enough, and $S \in \F$ is uniformly sampled, then with high probability there are at least $(2C)^w$ sets $S_i$ with $S_i \setminus S$ all red.  Thus there exists $Y \subset S$ such that at least $C^w$ many sets $S_i$ have $S_i \setminus S$ all red and $S_i \cap S=Y$.
In particular, there are $r$ sets $S_1, \ldots, S_r$ forming an $r$-sunflower so that for $1 \le i \le r$ we have $S_i \setminus S$ red and $S_i \cap S=Y$.  Because $Y$ is contained in the kernel of the sunflower, all of the petals are red.
\end{proof}

The above argument also shows that the optimal bounds in \Cref{conj:monopetal} and \Cref{conj:sunflower} are off by an exponential factor in $w$. For $r=3$, we expect a similar relationship between \Cref{conj:rainbowsunflower} and \Cref{conj:sunflower}.

\section*{Acknowledgements}
We thank Noga Alon, Matthew Wales, Andrey Kupavskii, Mohammadmahdi Jahanara, two anonymous STOC reviewers, and two anonymous Annals reviewers for helpful suggestions on an earlier version of this paper. We also thank Andrew Thomason, Gil Kalai, and Peter Frankl for pointing us to relevant references.

\bibliographystyle{abbrv}
\bibliography{sunflower}

\end{document}